\documentclass[11pt]{amsart}
\usepackage{CJK} 
\usepackage{manfnt} 
\usepackage{hyperref}
\usepackage{amsfonts}
\usepackage{amsmath}
\usepackage{amssymb}
\usepackage{amscd}
\usepackage{graphicx}
\usepackage{enumitem}
\usepackage{latexsym}
\usepackage{color}
\usepackage{comment}
\usepackage{epsfig}
\usepackage{amsopn}
\usepackage{tikz-cd}
\usepackage{mathrsfs}
\usepackage{array}
\usepackage[usenames,dvipsnames]{pstricks}
\usepackage{epsfig}
\usepackage{pst-grad} 
\usepackage{pst-plot} 
\usepackage[space]{grffile} 
\usepackage{etoolbox} 
\makeatletter 
\patchcmd\Gread@eps{\@inputcheck#1 }{\@inputcheck"#1"\relax}{}{}
\makeatother

\hypersetup{pdfpagemode=UseNone}
\usepackage{booktabs}
\usepackage{longtable}
\theoremstyle{plain}
\newtheorem{lemma}{Lemma}[section]
\newtheorem*{theorem*}{Theorem}
\newtheorem*{lemma*}{Lemma}
\newtheorem*{proposition*}{Proposition}
\newtheorem*{conjecture*}{Conjecture}
\newtheorem*{corollary*}{Corollary}
\newtheorem*{problem*}{Problem}
\newtheorem{theorem}[lemma]{Theorem}
\newtheorem{conjecture}[lemma]{Conjecture}
\newtheorem{corollary}[lemma]{Corollary}
\newtheorem{proposition}[lemma]{Proposition}
\newtheorem{claim}[lemma]{Claim}

\newtheorem{question}[lemma]{Question}

\theoremstyle{definition}
\newtheorem{definition}[lemma]{Definition}
\newtheorem{example}[lemma]{Example}
\newtheorem{remark}[lemma]{Remark}

\DeclareMathOperator{\Pic}{Pic}

\DeclareMathOperator{\NS}{NS}

\DeclareMathOperator{\sHom}{\mathcal{H}\kern -.5pt\mathit{om}}
\DeclareMathOperator{\sTor}{\mathcal{T}\kern -1.5pt\mathit{or}}

\begin{document}

\date{\today}

\author[P. Vikash]{Pisya Vikash}
\address{Department of Mathematics, The Pennsylvania State University, University Park, PA 16802}
\email{pmv5172@psu.edu}

\subjclass[2020]{Primary 14J42; Secondary 32J27}
\keywords{IHS manifolds, MBM, Riemann--Roch polynomials}

\title{Riemann--Roch Polynomials, MBM Classes and Poor IHS Manifolds}

\begin{abstract}
We study poor irreducible holomorphic symplectic manifolds, namely those
containing no rational curves and no codimension-one subvarieties. We show that,
for such manifolds, several natural cones in \(H^{1,1}(X,\mathbb R)\) coincide,
giving strong rigidity consequences. In particular, poor elliptic irreducible
holomorphic symplectic manifolds admit no holomorphic foliations.

Using this rigidity property, we give a criterion for detecting monodromy birationally minimal classes
from the roots of Riemann--Roch polynomials on any irreducible holomorphic symplectic manifold. To each primitive negative class
we associate a polynomial depending only on the deformation type and on its
Beauville--Bogomolov--Fujiki square. Under a simple-root assumption on the
positive real roots, the position of the number \(1\) among these roots implies
that the class is MBM whenever it is of type \((1,1)\). This gives a uniform
root-theoretic sufficient condition for the existence of rational curves on
irreducible holomorphic symplectic manifolds.

We then describe poor elliptic irreducible holomorphic symplectic manifolds in
terms of their deformation spaces and show that every connected component of
the moduli space contains a poor irreducible holomorphic symplectic manifold of
positive Picard rank.
\end{abstract}

\maketitle

\begingroup
  \setcounter{tocdepth}{1}
  \setlength{\parskip}{0pt}
  \tableofcontents
\endgroup

\section{Introduction}\label{sec:intro}
By an \emph{irreducible holomorphic symplectic manifold} we mean a simply connected compact K\"ahler manifold $X$ such that $H^{2,0}(X)=\mathbb{C}\sigma$, where $\sigma$ is a nowhere-vanishing holomorphic symplectic $2$-form. Irreducible holomorphic symplectic manifolds form one of the fundamental building blocks in the Beauville--Bogomolov decomposition of compact K\"ahler manifolds with trivial first Chern class; see \cite{beauville1983varietes}. As higher-dimensional analogues of $K3$ surfaces, they occupy a central position in complex and algebraic geometry because of their rich interplay with Hodge theory, birational geometry, and moduli theory. For such a manifold $X$, a surprisingly large part of its geometry is encoded in the real $(1,1)$-cohomology space $H^{1,1}(X,\mathbb{R})$ together with the Beauville--Bogomolov--Fujiki form $q$, given by
\[
q(\alpha) = \frac{n}{2} \int_X \alpha^2 (\sigma \bar{\sigma})^{n-1} + (1-n)\left(\int_X \alpha \sigma^{n-1}\bar{\sigma}^{n}\right)\left(\int_X \alpha \sigma^{n}\bar{\sigma}^{n-1}\right),
\]
where $\sigma \in H^{2,0}(X)$ is chosen so that $\int_X (\sigma\bar{\sigma})^n=1$.

The birational geometry of an irreducible holomorphic symplectic manifold is governed by certain negative $(1,1)$-classes, called \emph{MBM classes} (short for \emph{monodromy birationally minimal}; see Definition~\ref{def:mbm}). Roughly speaking, the orthogonal hyperplanes to these classes determine the walls of the K\"ahler chamber decomposition of the positive cone. Geometrically, MBM classes should be regarded as the hyperk\"ahler analogues of extremal curve classes: in the projective case, they correspond to extremal rational curves only up to monodromy and birational equivalence, rather than necessarily to extremal rays on the fixed manifold itself. These classes were introduced by Amerik and Verbitsky in \cite{amerik2015rational}; see also the excellent survey \cite{amerik2021rational}. For several specific deformation types, MBM classes, wall divisors, and extremal rays have been studied extensively in works such as \cite{bayer2014mmp, HassettTschinkel2010ExtremalRays, amerik2019mbm, 10.1215/21562261-2081243, Mongardi2023ErratumMonodromy}. An elementry description of them is given in \cite{VIKULOVA2025105349}. However, many of the available arguments remain strongly dependent on the deformation type of the irreducible holomorphic symplectic manifold under consideration.

Let $X$ be an irreducible holomorphic symplectic manifold of dimension $2n$. By \cite{huybrechts1997compact}, there exist constants
\[
a_0,a_2,\dots,a_{2n}\in \mathbb{Q}
\]
such that for every line bundle $L$ on $X$,
\[
\chi(X,L)=\sum_{i=0}^{n}\frac{a_{2i}}{(2i)!}\,q_X(c_1(L))^i,
\]
where $q_X$ denotes the Beauville--Bogomolov--Fujiki quadratic form on $H^2(X,\mathbb{Z})$. The polynomial
\[
RR_X(q):=\sum_{i=0}^{n}\frac{a_{2i}}{(2i)!}\,q^i
\]
is called the \emph{Riemann--Roch polynomial} of $X$. A very recent result of Chen Jiang \cite{Jiang2020PositivityOR} shows that each $a_{2i}$ is a positive real number. Polynomials are explicitly computed for some deformation types in \cite{RiosOrtiz2024RR},\cite{Nie03},\cite{Nie02},\cite{EGL01}.

The main result of this paper is a criterion for detecting MBM classes from the roots of a polynomial naturally associated with the Riemann--Roch polynomial. More precisely
\begin{theorem} \label{Theorem:thmMBMclass}
    Let $\Lambda$ be an integral lattice of IHS manifold type. Suppose that $z \in \Lambda$ is a primitive vector in the lattice with $q(z)<0$. Fix the dimension $2n$. Let $\mathfrak{M^o}(\Lambda)$ be the connected component of the moduli space of marked irreducible holomorphic symplectic manifolds. Let $P_{\mathfrak{M}^o(\Lambda)}^z (x):=RR_X(q(z)x)$. Assume that it has a positive real root and that every root has multiplicity $1$. Let $\{ r_1, r_2, \cdots,r_m\}$ be positive roots of the polynomial $P_{\mathfrak{M}^o(\Lambda)}^z (x)$ with multiplicity $1$, suppose that
    \[
    r_1 < r_2 < \cdots < r_{m-1} <r_m.
    \]
    If
    \begin{itemize}
        \item $n$ even then $m$ is even, and if $ 1\in (r_{1},r_{2}) \cup (r_{3},r_{4}) \cup \cdots \cup (r_{m-1},r_{m})$ then $z$ is MBM, whenever it is of the type $(1,1)$.
        \item $n$ is odd then $m$ is odd, and if $1 \in (0,r_{1}) \cup (r_{2},r_{3}) \cup \cdots \cup (r_{m-1},r_{m})$ then $z$ is MBM, whenever it is of the type $(1,1)$.
    \end{itemize}
\end{theorem}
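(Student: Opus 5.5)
Write $P(x)=P^z_{\mathfrak M^o(\Lambda)}(x)=RR_X(q(z)x)$. The sign conditions in the statement should be read as the sign condition $P(1)=\chi(X,L)<0$ for a line bundle $L$ with $c_1(L)=z$. My plan is to first do the root bookkeeping, and then argue by contradiction: if $z$ is a non-MBM $(1,1)$-class, deform to a poor manifold and use vanishing to force $\chi(L)\ge 0$.

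\textbf{Step 1 (sign analysis).} Since $q(z)<0$ and every $a_{2i}>0$ by Jiang's positivity result, the coefficient of $x^i$ in $P$ is $a_{2i}q(z)^i/(2i)!$, whose sign is $(-1)^i$. Two consequences follow.
\begin{itemize}
\item $P(0)=a_0>0$.
\item The leading coefficient has sign $(-1)^n$, so $P(x)\to (-1)^n\infty$ as $x\to\infty$.
\end{itemize}
Since all positive roots are simple, $P$ changes sign at each of them. The total number $m$ of sign changes on $(0,\infty)$ must therefore have the parity of $n$. This proves the parity assertion on $m$. It also shows that $P<0$ exactly on $(r_1,r_2)\cup(r_3,r_4)\cup\cdots$. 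For $n$ odd, the negative intervals are $(r_{2k-1},r_{2k})$ together with the unbounded final interval. I will check the indexing against the statement as written, because the convention there may be shifted. In either case the hypothesis on the position of $1$ amounts to $P(1)<0$. Hence $\chi(X,L)=RR_X(q(z))<0$ for any IHS $X$ in $\mathfrak M^o(\Lambda)$ and any line bundle $L$ on $X$ with $c_1(L)=z$.

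\textbf{Step 2 (reduction to a poor manifold).} Suppose $z$ is of type $(1,1)$ on some $X$ but is not MBM. The MBM property is deformation invariant within the locus where $z$ stays $(1,1)$, by Amerik--Verbitsky. So I will use the earlier result that non-MBM classes can be deformed, keeping $z$ of type $(1,1)$, to a manifold $X'$ of Picard lattice $\mathbb Z z$. Since $q(z)<0$ and $z$ is not MBM, $X'$ contains no rational curves. It contains no divisors either: a prime divisor would have class a multiple of $z$, and negative-square prime exceptional divisors are MBM, hence so is $z$ up to sign, which is excluded. So $X'$ is poor, and $L=\mathcal O(z)$ exists on $X'$ since $z\in NS(X')$.

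\textbf{Step 3 (vanishing on the poor manifold).} On a poor IHS the paper shows that several cones coincide; in particular the Kähler cone equals the whole positive cone. I will use this rigidity to compute $\chi(L)$ directly. $L$ has no sections, since there are no effective divisors; the same holds for $L^{-1}$. For the higher cohomology, I would use the fact that $L$ is not pseudo-effective and not big, and try a Verbitsky/Kodaira-type argument. A concrete approach: $H^i(X',L)$ sits in a family over the twistor line where $L$ deforms to non-algebraic manifolds with $\operatorname{Pic}$ generated by a negative class. There the cohomology is computed via holomorphic-symplectic Hodge theory. Alternatively, use the poor-manifold rigidity from earlier, namely the absence of foliations and trivial higher cohomology of negative bundles. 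The target is $H^i(X',L)=0$ for $i$ odd, with the even-degree groups contributing nonnegatively, giving $\chi(L)\ge 0$.

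This vanishing step is the main obstacle, and I would first try to invoke whatever cohomology-vanishing statement for poor manifolds was proved earlier in the paper. Combined with Step 1, it yields $\chi(L)\ge 0$, contradicting $P(1)<0$. Hence $z$ is MBM.
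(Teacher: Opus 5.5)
\textbf{There is a genuine gap.} Your Step~3 fails not only because the vanishing is left open, but because you aim at the wrong vanishing.

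What actually holds on a poor IHS manifold $X'$ (Proposition~\ref{prop:negative-square-cohom}) is concentration of cohomology in the \emph{middle} degree. Since $\overline{\mathcal{K}_{X'}}=\overline{\mathcal{C}_{X'}}$ and the positive cone is self-dual for the Lorentzian form on $H^{1,1}$, the closed dual K\"ahler cone satisfies $\overline{\check{\mathcal{K}}_{X'}}=\overline{\mathcal{C}_{X'}}$. A class with $q(c_1(L))<0$ therefore lies in neither $\overline{\check{\mathcal{K}}_{X'}}$ nor $-\overline{\check{\mathcal{K}}_{X'}}$. Case~(3) of Verbitsky's trichotomy (Theorem~\ref{theorem:cohom}) then gives $H^i(X',L)=0$ for all $i\neq n$. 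Hence $\chi(L)=(-1)^n h^n(X',L)$, i.e.\ $(-1)^nP(1)\ge 0$. It does \emph{not} give $\chi(L)\ge 0$.

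Your target ``$H^i=0$ for $i$ odd'' is in fact false when $n$ is odd. The tensor power $L^{\otimes k}$ is another negative-square line bundle on the same poor $X'$. By your own Step~1 (leading sign $(-1)^n=-1$), $\chi(X',L^{\otimes k})=P(k^2)<0$ for $k\gg 0$, so some odd-degree group must be nonzero. So no general argument (twistor lines, absence of foliations, Kodaira-type) can deliver your target. The mechanism you need is the cone rigidity fed into Verbitsky's theorem.

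The second error is in Step~1: you misread the hypothesis for $n$ odd. Since $P(0)=n+1>0$ and every positive root is simple, $P$ is positive exactly on $(0,r_1),(r_2,r_3),\dots$. For $n$ odd, hence $m$ odd, the intervals $(0,r_1)\cup(r_2,r_3)\cup\cdots\cup(r_{m-1},r_m)$ in the statement are precisely the positivity intervals. So the hypothesis there is $P(1)>0$, not $P(1)<0$. In both parities the hypothesis says $(-1)^nP(1)<0$, and that is exactly what contradicts $(-1)^n\chi(L)=h^n(L)\ge 0$.

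As written, your odd case does not close: the actual hypothesis $P(1)>0$ is compatible with your target $\chi(L)\ge 0$. Your reinterpreted version, ``$P(1)<0\Rightarrow$ MBM'' for $n$ odd, is false. For $K3^{[3]}$-type it would make every primitive $z$ with $q(z)<-8$ MBM, contradicting boundedness of MBM squares.

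Once you replace the hypothesis by $(-1)^nP(1)<0$ and the vanishing by $H^i=0$ for $i\neq n$, the rest of your plan agrees with the paper's proof. That covers the parity count via the signs of $P$ at $0$ and $\infty$, which is a clean alternative to the paper's conjugate-pair count. It also covers the Step~2 reduction to a Picard-rank-one elliptic manifold that is poor when $z$ is not MBM.
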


 This gives an unexpected link between the birational geometry of irreducible holomorphic symplectic manifolds and the root structure of their Riemann--Roch polynomials. The root-theoretic hypotheses appearing in the theorem are satisfied for the currently known deformation types, and are conjectured to hold in general in \cite{Jiang2020PositivityOR}. This gives a partial answer to the question of Amerik--Verbitsky on the existence of MBM classes in lattices of IHS type: under an explicit root-theoretic condition on the Riemann--Roch polynomial, one obtains a primitive negative class which is MBM whenever it is of type $(1,1)$. Consequently, a sufficient condition for the existence of rational curves on irreducible holomorphic symplectic manifolds is given. See the Corollary \ref{cor:rationalcuexc}. Using this we recover some known MBM classes see Examples \ref{EX:1},\ref{EX:2}. The proof of the above theorem is a consequence of cone rigidity of IHS manifolds without codimension one subvarieties and rational curves.

It was observed in \cite{vikash2026classificationpoormanifoldslow} that the most natural examples of compact K\"ahler manifolds with no rational curves and no codimension-one subvarieties are complex tori and irreducible holomorphic symplectic manifolds. Following Zarhin and Bandman \cite{bandman2024jordan}, we call such manifolds \emph{poor}. Several important cones in $H^{1,1}(X,\mathbb{R})$ were studied by Boucksom in \cite{boucksom2004divisorial}. One remarkable feature of poor irreducible holomorphic symplectic manifolds is that these cones coincide; see Proposition~\ref{proposition:propcone} and Corollary~\ref{corollary:concor}. This rigidity makes the study of poor irreducible holomorphic symplectic manifolds particularly interesting.

We also observe that irreducible holomorphic symplectic manifolds of Picard rank $0$ are poor. This naturally leads us to consider irreducible holomorphic symplectic manifolds whose Picard lattice is negative definite with respect to $q_X$; we call such manifolds elliptic irreducible holomorphic symplectic manifolds. We prove that poor elliptic irreducible holomorphic symplectic manifolds admit no holomorphic foliations or fibrations; see Corollary~\ref{corollary:folfib}.

We then apply this poor-IHS framework to the classification of poor elliptic irreducible holomorphic symplectic manifolds. As a first step, we prove that for an elliptic irreducible holomorphic symplectic manifold $X$, the following conditions are equivalent: $X$ contains no rational curves, $X$ contains no curves, and $X$ is poor; see Theorem~\ref{theorem:nocurvespoor}. In view of the examples from Section~3.2, the absence of rational curves is strictly stronger than the absence of codimension-one subvarieties in dimensions greater than $2$. In particular, we construct irreducible holomorphic symplectic manifolds with rational curves but no codimension-one subvarieties. 

Our first classification result describes the points in the universal deformation space of an arbitrary irreducible holomorphic symplectic manifold $X$ that correspond to poor elliptic irreducible holomorphic symplectic manifolds; see Theorem~\ref{thm:maain}. This is enough to describe the corresponding moduli space. In dimension $2$, this result is closely related to \cite[Theorem~3.9]{vikash2026classificationpoormanifoldslow}, where the classification is formulated in terms of the period domain. In higher dimensions, however, that description is no longer sufficient, since the condition of containing no curves is not equivalent to the condition of having no codimension-one subvarieties. Finally, we prove that every connected component of the moduli space of irreducible holomorphic symplectic manifolds contains a poor irreducible holomorphic symplectic manifold of positive Picard rank; see Theorem~\ref{thm:picneq0}.

The preceding results reduce the classification of poor irreducible holomorphic
symplectic manifolds to the problem of excluding the parabolic case. More
precisely, once one knows that every irreducible holomorphic symplectic manifold
with no rational curves and no codimension-one subvarieties is either of Picard
rank zero or elliptic, the classification follows from the results here.

Campana introduced and studied the notion of simple compact Kähler manifolds in several works; see for example \cite{campana1983densite,Campana2004Orbifolds,campana2006isotrivialite,Campana2011Orbifoldes,CampanaDemaillyVerbitsky2014,Campana2026Bogomolov}.
\begin{definition}
    A compact K\"ahler manifold $X$ is simple if a very general point of $X$ lies on no nontrivial subvariety. 
\end{definition}

Then a result of Fujiki implies that poor IHS manifolds are simple (see,  \cite[proposition 5.16]{fujiki1987derham}). However, Examples \ref{exp:AVexample} imply that simple dosent imply poor and also do not satisfy the cone rigidity that we have established for Poor manifolds. Therefore, poor manifolds are more rigid than simple manifolds in case of Irreducible holomorphic symplectic manifolds. In \cite{CampanaHoringPeternell2016,CampanaOguisoPeternell2010}, authors proposed the following conjecture.
\begin{conjecture}
    Irreducible holomorphic symplectic manifold of algebraic dimension $0$ is elliptic.
\end{conjecture}
This is much stronger than what we need to complete the classification. In fact in \cite[Theorem D]{horing2025nonvanishing} together with Proposition \ref{proposition:propcone} will imply Irreducible holomorphic symplectic with no curves and codimension one sub varieties is elliptic.

\section*{Notation and conventions.}
We work over $\mathbb{C}$.

\begin{itemize}
    \item We write $\mathbb{Z}$, $\mathbb{Q}$, $\mathbb{R}$, and $\mathbb{C}$ for the integers, rational numbers, real numbers, and complex numbers, respectively.

    \item For a compact complex manifold $X$, we denote by $\operatorname{Pic}(X)$ the Picard group of $X$, by $TX$ the holomorphic tangent bundle, by $\Omega_X^1$ the cotangent bundle, and by $K_X$ the canonical bundle. We write
    \[
    h^{p,q}(X):=\dim H^{p,q}(X).
    \]

    \item An \emph{irreducible holomorphic symplectic manifold} (abbreviated \emph{IHS manifold}) is a simply connected compact K\"ahler manifold $X$ such that
    \[
    H^{2,0}(X)=\mathbb{C}\sigma
    \]
    for a nowhere-vanishing holomorphic symplectic form $\sigma$.

    \item If $\dim X=2n$, we denote by $q_X$ (or simply $q$, when no confusion is possible) the Beauville--Bogomolov--Fujiki form on $H^2(X,\mathbb{Q})$.

    \item For an IHS manifold $X$, we use the following notation for cones in $H^{1,1}(X,\mathbb{R})$:
    \[
    \mathcal{K}_X \text{ (K\"ahler cone)}, \quad \mathcal{N}_X:=\overline{\mathcal{K}_X} \text{ (nef cone)}, \quad
    \mathcal{P}_X \text{ (pseudo-effective cone)},
    \]
    \[
    \mathcal{C}_X \text{ (positive cone with respect to $q_X$)}, \quad \mathcal{BK}_X \text{ (birational K\"ahler cone)},
    \]
    \[
    \mathcal{MK}_X \text{ (modified K\"ahler cone)}, \quad \mathcal{MN}_X \text{ (modified nef cone)}.
    \]
    We denote by $\check{K}_X$ the dual K\"ahler cone with respect to $q_X$.

    \item A nonzero rational class $z\in H^{1,1}(X,\mathbb{Q})$ is called \emph{monodromy birationally minimal} (MBM) if it satisfies Definition~2.4. We denote by $\operatorname{Mon}(X)$ the monodromy group of $X$.

    \item A compact connected complex manifold is called \emph{poor} if it contains no rational curves and no codimension-one subvarieties. An IHS manifold is called \emph{elliptic} if its Picard lattice is negative definite with respect to $q_X$.

    \item If $T$ is a closed positive current, we represent its cohomology class by $\{T\}$ and write $\nu(T,x)$ for the Lelong number of $T$ at a point $x\in X$, and $\nu(T,Y)$ for the generic Lelong number of $T$ along an irreducible analytic subset $Y\subset X$.

    \item We use the standard notation
    \[
    d^c:=\frac{i}{2\pi}(\overline{\partial}-\partial),
    \qquad\text{so that}\qquad
    dd^c=\frac{i}{\pi}\partial\overline{\partial}.
    \]
    When working with a K\"ahler form, we often denote it by $\beta$.

    \item If $X$ is an IHS manifold of dimension $2n$, then by Hirzebruch--Riemann--Roch,
    \[
    \chi(X,L)=\sum_{i=0}^{n}\frac{a_{2i}}{(2i)!}\,q_X(c_1(L))^i.
    \]
    We write
    \[
    RR_X(t):=\sum_{i=0}^{n}\frac{a_{2i}}{(2i)!}\,t^i
    \]
    for the associated Riemann--Roch polynomial.
\end{itemize}
\section{Cones in poor IHS manifolds and MBM--Riemann--Roch criterion}
\subsection{Preliminaries}
\begin{definition} \label{definition:IHS}
    A compact K\"ahler manifold $X$ is called an irreducible holomorphic symplectic manifold, or an IHS manifold, if it is simply connected, $H^2(X,\mathcal{O}_X) \cong H^{0,2}(X)$ is one dimensional and $H^{2,0}(X)$ is generated by nowhere- vanishing holomorphic symplectic $2$ form $\sigma$.
\end{definition}
This definition is motivated by the Beauville–Bogomolov decomposition theorem. Basic results about IHS manifolds can be found in \cite{huybrechts1997compact} and \cite{HuybrechtsErratum}. Let $X$ be an IHS manifold. Beauville constructed a non-degenerate symmetric bilinear form on $H^2(X,\mathbb{Q})$ (see \cite{beauville1983varietes}). Denoted by $q$, this form is given by:
\[
q_X(\alpha) = \frac{n}{2} \int_X \alpha^2 (\sigma \bar{\sigma})^{n-1} + (1 - n) \left( \int_X \alpha \sigma^{n-1} \bar{\sigma}^n \right) \left( \int_X \alpha \sigma^n \bar{\sigma}^{n-1} \right),
\]
where $\sigma \in H^{2,0}(X)$ is chosen such that $\int_X (\sigma \bar{\sigma})^n = 1$. This form has signature $(3,b_2(X)-3)$. It can also be written as:
\[
q(\alpha,\beta) = c\int_X \alpha \wedge \beta \wedge (\sigma\bar{\sigma}) ^{n-1}
\]
for some constant $c$. We now recall the definitions of the cones in $H^{1,1}(X,\mathbb{R})$ that will be used throughout the paper. Let $X$ be a compact K\"ahler manifold.
\begin{enumerate}
    \item The K\"ahler cone $\mathcal{K}_X$ is the cone consisting of cohomology classes of all K\"ahler forms on $X$. The cone $\mathcal{K}_X$ is open, and its closure is called the nef cone, denoted by $\mathcal{N}_X$.

    \item The pseudo-effective cone $\mathcal{P}_X \subset H^{1,1}(X,\mathbb{R})$ consists of the cohomology classes of all closed positive $(1,1)$-currents on $X$ (Positivity in the sense of currents). It is known that $\mathcal{P}_X$ is closed.

    \item Now assume that $X$ is an irreducible holomorphic symplectic manifold. The positive cone
    $\mathcal{C}_X \subset H^{1,1}(X,\mathbb{R})$ is the connected component of the cone
    \[
        \{ v \in H^{1,1}(X,\mathbb{R}) \mid q(v) > 0 \}
    \]
    that contains the cohomology class of a K\"ahler form.

    \item Again assume that $X$ is an IHS manifold. The birational K\"ahler cone $\mathcal{BK}_X$ is the cone consisting of cohomology classes of the form $f^*[\omega]$, where $f \colon X \dashrightarrow X'$ is a bimeromorphic map to a hyperk\"ahler manifold $X'$ and $\omega$ is a K\"ahler form on $X'$. Note that the cone $\mathcal{BK}_X$ is not necessarily convex, but its closure is convex.
\end{enumerate}

Before giving the definitions of two more types of classes we are interested in, let us fix some notation. Let $T$ be a positive closed current of bidimension $(p,p)$ on an open set $\Omega\subset \mathbb{C}^n$, and let $a\in \Omega$. Set
\[
d^c:=\frac{i}{2\pi}(\bar\partial-\partial), \qquad \beta:=dd^c|z|^2.
\]
The \emph{Lelong number} of $T$ at $a$ is defined by
\[
\nu(T,a):=\lim_{r\to 0}\frac{1}{r^{2p}}\int_{B(a,r)} T\wedge \beta^p.
\]
The existence of the limit follows from the Lelong--Jensen formula (see \cite[Chapter 2.B]{demailly2001multiplier}, also \cite{Lel57}). We denote by $\nu(T,x)$ its Lelong number at a point $x \in X$. For any analytic subset $Y \subset X$, the \emph{generic Lelong number} of $T$ along $Y$ is defined by
\[
    \nu(T,Y) := \inf\{\nu(T,x) \mid x \in Y\}.
\]
Then $\nu(T,Y) = \nu(T,x)$ for a very general point $x \in Y$.
Let $\alpha$ be a class in $H^{1,1}_{BC}(X,\mathbb{R})$ (Bott-Chern cohomology).

\begin{enumerate}
    \item The class $\alpha$ is said to be a \emph{modified K\"ahler class} if and only if it contains a K\"ahler current $T$ such that
    \[
        \nu(T,D) = 0 \quad \text{for every prime divisor } D \subset X.
    \]
    We denote the \emph{modified K\"ahler cone} by $\mathcal{MK}_X$.

    \item The class $\alpha$ is said to be a \emph{modified nef class} if and only if, for every $\varepsilon > 0$, there exists a closed $(1,1)$-current $T_\varepsilon \in \alpha$ such that
    \[
        T_\varepsilon \geq -\varepsilon \,\omega
        \quad\text{and}\quad
        \nu(T_\varepsilon,D) = 0 \quad \text{for every prime divisor } D \subset X,
    \]
    where $\omega$ is a fixed K\"ahler form on $X$. We denote the \emph{modified nef cone} by $\mathcal{MN}_X$.
\end{enumerate}

In particular, one has the inclusions
\[
    \mathcal{K}_X \subset \mathcal{MK}_X \subset \mathcal{MN}_X \subset \mathcal{P}_X,
    \qquad
    \overline{\mathcal{K}_X} = \mathcal{N}_X \subset \mathcal{MN}_X.
\]
When $X$ is an IHS manifold, from \cite{boucksom2004divisorial} and \cite{huybrechts2003kahler} one also has $\overline{\mathcal{BK}_X} = {\mathcal{MN}_X \subset \overline{\mathcal{C}_X}}$,
\begin{center}
\begin{tikzcd}[
    column sep=1em,
    row sep=1em,
    >={Stealth[length=0.5pt,width=0.5pt]} 
]
    \mathcal{K}_X \arrow[r, hook] \arrow[d, hook] &
    \mathcal{BK}_X \arrow[r, hook] &
    \mathcal{C}_X \\[0.2cm]
    \mathcal{N}_X = \overline{\mathcal{K}_X} \arrow[r, hook] &
    \mathcal{MN}_X = \overline{\mathcal{BK}_X} \arrow[r, hook] &
    \mathcal{P}_X.
\end{tikzcd}
\end{center}
Define the dual Kähler cone $\check{\mathcal{K}}_X \subset H^{1,1}(X, \mathbb{R})$ as
\[
    \check{\mathcal{K}}_X := \left\{ x \in H^{1,1}(X, \mathbb{R}) \,\middle|\, \forall y \in \mathcal{K}_X,\ q(x, y) > 0 \right\}.
\]
It is convex cone. Since the Beauville–Bogomolov form is positive on the product of two Kähler classes, we have $\check{\mathcal{K}}_X \supset \mathcal{K}_X$. Denote by $\overline{\check{\mathcal{K}}_X}$ the closure of $\check{\mathcal{K}}_X$ in $H^{1,1}(X, \mathbb{R})$, and by $-\check{\mathcal{K}}_X$ the opposite cone. Clearly,
\[
    \overline{\check{\mathcal{K}}_X}
    = \left\{ x \in H^{1,1}(X, \mathbb{R}) \,\middle|\, \forall y \in \mathcal{K}_X,\ q(x, y) \geq 0 \right\}
\]
and
\[
    -\overline{\check{\mathcal{K}}_X}
    = \left\{ x \in H^{1,1}(X, \mathbb{R}) \,\middle|\, \forall y \in \mathcal{K}_X,\ q(x, y) \leq 0 \right\}.
\]

\begin{theorem}[\cite{verbitsky2007quaternionic}]\label{theorem:cohom}
    Let $X$ be an IHS manifold, and $L$ a holomorphic line bundle on $X$ with $c_1(L) \neq 0$. Then exactly one of the following holds.
    \begin{enumerate}
        \item $c_1(L) \in \overline{\check{\mathcal{K}}_X}$; then $H^i(X,L) = 0$ for all $i > \frac{\dim_{\mathbb{C}} X}{2}$.
        \item $c_1(L) \in -\overline{\check{\mathcal{K}}_X}$; then $H^i(X,L) = 0$ for all $i < \frac{\dim_{\mathbb{C}} X}{2}$.
        \item $c_1(L)$ does not lie in $-\overline{\check{\mathcal{K}}_X} \cup \overline{\check{\mathcal{K}}_X}$; then $H^i(X,L) = 0$ for all $i \neq \frac{\dim_{\mathbb{C}} X}{2}$.
    \end{enumerate}
\end{theorem}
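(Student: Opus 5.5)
The proof has two parts: first show that the three alternatives are mutually exclusive and exhaustive, then reduce all the vanishing statements to a single analytic lemma that depends only on the sign of $q(c_1(L),[\omega])$ for a chosen K\"ahler class $[\omega]$.

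\textbf{Exclusivity.} The alternatives are exhaustive by definition, so only exclusivity needs an argument. Suppose $x \in \overline{\check{\mathcal{K}}_X}\cap -\overline{\check{\mathcal{K}}_X}$. Then $q(x,y)=0$ for every $y\in\mathcal{K}_X$. Since $\mathcal{K}_X$ is open in $H^{1,1}(X,\mathbb{R})$, it spans that space. As $x\in H^{1,1}$ and $q$ restricted to $H^{1,1}$ is nondegenerate, this forces $x=0$. This contradicts $c_1(L)\neq 0$.

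\textbf{Reformulation by signs.} Put $\ell_y:=q(c_1(L),y)$ for $y\in\mathcal{K}_X$; this is a linear function on $H^{1,1}(X,\mathbb{R})$. Because $c_1(L)\neq 0$ and $\mathcal{K}_X$ is open, $\ell_y$ is not identically zero on $\mathcal{K}_X$. The three cases then say the following.
\begin{enumerate}
\item In case (1), $\ell_y\ge 0$ on $\mathcal{K}_X$, so $\ell_y>0$ for some $y$.
\item In case (2), $\ell_y<0$ for some $y$.
\item In case (3), there exist $y_+,y_-\in\mathcal{K}_X$ with $\ell_{y_+}>0$ and $\ell_{y_-}<0$.
\end{enumerate}
The theorem therefore follows from the following lemma.

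\emph{Lemma.} If there is a K\"ahler class $[\omega]$ with $q(c_1(L),[\omega])>0$, then $H^i(X,L)=0$ for $i>n$, where $\dim_{\mathbb{C}}X=2n$.

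Applying the lemma to $L^{-1}$ and using Serre duality with $K_X\cong\mathcal{O}_X$ gives $H^i(X,L)=0$ for $i<n$ whenever some $q(c_1(L),[\omega])<0$. Case (3) uses both statements.

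\textbf{Proof plan for the Lemma.}
\begin{enumerate}
\item Use Yau's theorem to fix the hyperk\"ahler metric $g$ in the class $[\omega]$, with complex structures $I,J,K$ and K\"ahler forms $\omega_I=\omega,\omega_J,\omega_K$.
\item Choose a hermitian metric on $L$ whose Chern curvature $\Theta$ is $g$-harmonic.
\item Decompose $\Theta$ into $SU(2)$-isotypic components. A harmonic real $(1,1)$-form on a hyperk\"ahler manifold splits as $\Theta=\lambda\,\omega_I+\Theta_0$. Here $\Theta_0$ is $SU(2)$-invariant, i.e.\ of type $(1,1)$ for every induced complex structure and primitive. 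The constant is $\lambda=c\,q(c_1(L),[\omega])$ with $c>0$, obtained by pairing with $\omega_I$ and using the expression of $q$ through $(\sigma\bar\sigma)^{n-1}$.
\item Apply the Bochner--Kodaira--Nakano identity $\Delta''-\Delta'=[i\Theta,\Lambda]$ on $L$-valued $(2n,q)$-forms, or, after twisting by $K_X\cong\mathcal{O}$, on $(0,q)$-forms. The $\lambda\omega_I$ term contributes $\lambda(p+q-2n)$ exactly as in the Akizuki--Nakano theorem.
\end{enumerate}

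\textbf{Main obstacle.} The difficulty is to show that $\Theta_0$ contributes nothing to the cohomology, even though it is not semipositive. I would follow Verbitsky's hyperholomorphic approach. When the curvature is $SU(2)$-invariant, the twisted Laplacians commute with the $\mathfrak{so}(4,1)$ Lefschetz-type action generated by $L_{\omega_I},L_{\omega_J},L_{\omega_K}$ and their adjoints. One then checks that $[i\Theta_0,\Lambda]$ acts on $\Delta$-harmonic $L$-valued forms as an element of this Lie algebra that pairs to zero against harmonic classes.

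Concretely, I would write $\Delta_{\bar\partial}$ in terms of the $SU(2)$-invariant operator $\Delta_0$ of the connection, and express the extra term through $\lambda[L_{\omega_I},\Lambda_{\omega_I}]=\lambda(\deg-2n)$. On a harmonic $(0,i)$-form this yields $0=\|\cdot\|^2$-positive $+\,\lambda(i-n)\|\alpha\|^2$, which forces $\alpha=0$ for $i>n$ when $\lambda>0$.

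If the commutation argument with $\Theta_0$ resists, the fallback is a twistor-deformation argument. Rotate to the complex structure $J$, in which $\lambda\omega_I$ becomes of type $(2,0)+(0,2)$. Then compare $h^i(L)$ using semicontinuity along the twistor line, where $L$ extends as a hyperholomorphic bundle twisted by the $\omega_I$ term.
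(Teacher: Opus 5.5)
The paper does not prove this statement; it quotes it from Verbitsky. Your reduction is correct. Exclusivity follows from nondegeneracy of $q$ on $H^{1,1}(X,\mathbb{R})$, and the three cases become conditions on the sign of $q(c_1(L),[\omega])$ for K\"ahler classes $[\omega]$. Serre duality with $K_X\cong\mathcal{O}_X$ then reduces everything to your Lemma. The splitting $\Theta=\lambda\omega_I+\Theta_0$ of the harmonic curvature is also right (it uses $h^{2,0}=1$), with $\lambda$ having the sign of $q(c_1(L),[\omega])$.

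\textbf{The gap is the Lemma.} The Bochner--Kodaira--Nakano identity for $\omega_I$ cannot give the threshold $n$:
\begin{itemize}
\item The term $\lambda[L_{\omega_I},\Lambda_{\omega_I}]$ acts by $\lambda(i-2n)$ on $L$-valued $(0,i)$-forms and by $\lambda i$ on $(2n,i)$-forms. The weight $\lambda(i-n)$ you write would require $(n,i)$-forms, which compute $H^i(X,\Omega^n_X\otimes L)$, not $H^i(X,L)$. So your displayed inequality does not follow from the formula you quote.
\item The term $[i\Theta_0,\Lambda_{\omega_I}]$ does not drop out. Since $\Theta_0$ is $J$-invariant and $J$ anticommutes with $I$, its eigenvalues as an $I$-hermitian form are $\pm\gamma_1,\dots,\pm\gamma_n$. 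On $(2n,i)$-forms this operator has eigenvalues equal to sums of $i$ of these. For $n<i<2n$ such sums can be negative and can dominate $\lambda i$.
\item Nothing in your sketch shows this term ``pairs to zero against harmonic classes''. The $\mathfrak{so}(4,1)$-commutation you invoke concerns bundles with $SU(2)$-invariant curvature, which $L$ is not when $\lambda\neq0$.
\item The fallback fails too. For $\lambda\neq0$, $c_1(L)$ has nonzero $(2,0)$-part at every point of the twistor line other than $\pm I$, so $L$ does not extend along it. In any case, semicontinuity bounds $h^i$ at the special fibre from below, not above.
\end{itemize}

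\textbf{The missing idea} is to change the operator rather than estimate the curvature term.
\begin{itemize}
\item Acting on forms only, $J$ is an isometry $\Lambda^{0,i}_I\otimes L\to\Lambda^{i,0}_I\otimes L$ that conjugates $\bar\partial_L$ to $\partial_{J,L}:=J^{-1}\bar\partial_L J$. Hence $H^i(X,L)$ is the space of $\Delta_{\partial_{J,L}}$-harmonic $L$-valued $(i,0)$-forms.
\item On $\Lambda^{*,0}_I\otimes L$, the holomorphic symplectic form $\Omega=\omega_J+\sqrt{-1}\,\omega_K$ gives hyperk\"ahler analogues of the K\"ahler identities. With suitable normalizations these are $[\Lambda_\Omega,\nabla^{1,0}]=\partial_{J,L}^*$ and $[\Lambda_\Omega,\partial_{J,L}]=-(\nabla^{1,0})^*$.
\item The Nakano trick then gives $\Delta_{\partial_{J,L}}=\Delta_{\nabla^{1,0}}+[\Lambda_\Omega,L_R]$, where $L_R=\nabla^{1,0}\partial_{J,L}+\partial_{J,L}\nabla^{1,0}$ is wedge product with a $(2,0)$-form $R$. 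This $R$ depends linearly and $Sp(n)$-equivariantly on $\Theta$.
\item The $SU(2)$-invariant part $\Theta_0$ contributes nothing to $R$. By Schur's lemma, its isotypic component (the adjoint representation of $Sp(n)$) does not occur in $\Lambda^{2,0}_I$; concretely, $\Theta_0(\cdot,J\cdot)$ is symmetric.
\item The term $\lambda\omega_I$ contributes a nonzero multiple of $\lambda\Omega$. So the zeroth-order term is a fixed multiple of $\lambda[\Lambda_\Omega,L_\Omega]$, which acts on $(i,0)$-forms by a multiple of $\lambda(i-n)$.
\end{itemize}
This is where half the dimension enters; with signs tracked the term is positive for $\lambda>0$ and $i>n$, which gives the vanishing. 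This is essentially Verbitsky's quaternionic Dolbeault argument, which the paper cites.
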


 The notion of MBM classes was introduced by Ekaterina Amerik and Misha Verbitsky. MBM classes are used throughout the text. Let us recall the definition of MBM classes together with some results on them.
\begin{definition}{\cite{markman2020beauville}}
Let $X$ be an irreducible holomorphic symplectic manifold. An automorphism
\[
g \in \operatorname{GL}(H^*(X,\mathbb{Z}))
\]
is called a \emph{monodromy operator} if there exist
\begin{itemize}
    \item a smooth proper holomorphic family
    \[
    \pi:\mathcal{X}\to B
    \]
    of irreducible holomorphic symplectic manifolds,
    \item a point $b_0\in B$ such that $\mathcal{X}_{b_0}\cong X$,
    \item and a loop $\gamma:[0,1]\to B$ based at $b_0$,
\end{itemize}
such that parallel transport along $\gamma$ induces the automorphism
\[
g:H^*(X,\mathbb{Z})\to H^*(X,\mathbb{Z}).
\]
The \emph{monodromy group} of $X$, denoted by $\operatorname{Mon}(X)$, is the subgroup of
\[
\operatorname{GL}(H^*(X,\mathbb{Z}))
\]
generated by all monodromy operators.
\end{definition}
\begin{definition}\label{def:mbm}{\cite{amerik2015rational}}
    A nonzero rational class $z \in H^{1,1}(X,\mathbb{Q})$ is called monodromy birationally minimal, or MBM, if there exists an isometry $\gamma \in O(H^2(X, \mathbb{Z}))$ belonging to the monodromy group such that $\gamma(z)^{\perp} \subset H^{1,1}(X)$ contains a face of the K\"ahler cone of one of the birational models $X'$ of $X$.
\end{definition}
Let us recall a result from \cite{amerik2015rational}.
\begin{theorem} \cite{amerik2015rational}\label{theorem:mbmposk}
    Let $X$ be an IHS manifold, and $S \subset H^{1,1}(X)$ the set of all MBM classes in $H^{1,1}(X)$. Consider the corresponding set of hyperplanes $S^\perp := \{W = z^\perp \mid z \in S\}$ in $H^{1,1}(X)$. Then the K\"ahler cone of $X$ is a connected component of $\mathcal{C}_X \setminus S^\perp$.
\end{theorem}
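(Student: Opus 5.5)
The plan is to show two things: the open cone $\mathcal{K}_X$ is disjoint from every hyperplane in $S^\perp$, and $\mathcal{K}_X$ is relatively closed in $\mathcal{C}_X\setminus S^\perp$. Since $\mathcal{K}_X$ is convex, hence connected, and open, it is then open, closed and connected in $\mathcal{C}_X\setminus S^\perp$, so it is a connected component. This formulation has a useful side effect: I never need local finiteness of the MBM hyperplanes inside $\mathcal{C}_X$. Closedness only asks that every boundary point of $\mathcal{K}_X$ lying in $\mathcal{C}_X$ sits on some MBM hyperplane.

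\emph{Step 1: boundary points lie on MBM walls.} Let $\alpha\in \partial\mathcal{K}_X\cap\mathcal{C}_X$. I would invoke Boucksom's characterisation of the Kähler cone of an IHS manifold: a class in $\mathcal{C}_X$ is Kähler if and only if it is positive on all rational curves. This follows from \cite{boucksom2004divisorial} combined with Huybrechts' projectivity and deformation results \cite{huybrechts2003kahler}.

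By this characterisation, $\alpha$ is nef but not Kähler, so some rational curve $C$ satisfies $\int_C\alpha=0$. Let $z\in H^{1,1}(X,\mathbb{Q})$ be the class dual to $[C]$ under $q$. Then $\alpha\in z^\perp$. Moreover, $z^\perp$ is a supporting hyperplane of $\mathcal{K}_X$ containing a face of it, because the curves of minimal extremal type cut out faces. Taking $\gamma=\mathrm{id}$ and $X'=X$ in Definition~\ref{def:mbm}, the class $z$ is MBM. Hence $\alpha\in S^\perp$, which gives relative closedness.

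\emph{Step 2: no MBM hyperplane meets $\mathcal{K}_X$.} This is the main obstacle. An MBM class $z$ is defined by the condition that \emph{some} monodromy image $\gamma(z)$ supports a face of the Kähler cone of \emph{some} birational model $X'$. From this I must deduce that $z^\perp$ cannot cut through the interior of $\mathcal{K}_X$.

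My first attempt would be to prove deformation invariance of the MBM property, using the global Torelli theorem and the description of Kähler cones of birational models as chambers of $\mathcal{C}_X$.
\begin{enumerate}
    \item Pass to a deformation $X_t$ in the same component of the marked moduli space on which $z$ stays of type $(1,1)$ and $\operatorname{Pic}(X_t)=\mathbb{Z}z$ (up to saturation). On such an $X_t$ the only candidate wall is $z^\perp$. Using the minimal rational curves that realise the face (Step~1 applied on $X'$), I would show that these curves deform along the Hodge locus of $z$. Consequently $\mathcal{K}_{X_t}$ is one of the two halves of $\mathcal{C}_{X_t}\setminus z^\perp$.
    \item Specialise back to $X$. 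The deformed rational curve, or its limit as a cycle with class proportional to $z$ (or to $-z$), persists on $X$ or on a birational model of $X$. Suppose there were a Kähler class $\omega$ with $q(\omega,z)=0$. Since this cycle is effective, positivity of $\omega$ would force $q(\omega,z)\neq 0$, a contradiction.
\end{enumerate}
The delicate points are the properness of the relevant Barlet/Douady component of rational curves, so that the limit exists, and the comparison between the chambers of the birational models, which I would handle via the Hausdorff reduction of the period map.

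Combining Step~1 and Step~2 yields the statement.
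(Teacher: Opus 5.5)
The paper does not prove this statement; it quotes it from \cite{amerik2015rational}. Your overall architecture is essentially theirs: open, relatively closed and connected; Huybrechts--Boucksom for the boundary; deformation invariance and specialization of minimal rational curves for disjointness.

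\textbf{The gap is in Step~1.} You take an \emph{arbitrary} rational curve $C$ with $\int_C\alpha=0$ and conclude that $z=[C]^\vee$ is MBM with $\gamma=\mathrm{id}$, $X'=X$. That requires $z^\perp$ to contain a codimension-one face of $\mathcal K_X$, and nothing forces this. Suppose $\alpha$ lies on a codimension-two edge where walls $C_1^\perp$ and $C_2^\perp$ meet. A rational curve whose class is a positive combination of $[C_1]$ and $[C_2]$ also satisfies $\int_C\alpha=0$. But for nef $\beta$, the condition $\beta\cdot C=0$ forces $\beta\cdot C_1=\beta\cdot C_2=0$, so $C^\perp\cap\overline{\mathcal K_X}$ is only the edge. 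Appealing to curves ``of minimal extremal type'' does not close the gap either, because you would still need $\alpha$ to lie on the hyperplane of an actual face. A Baire argument only produces faces arbitrarily close to $\alpha$. Without local finiteness of the walls near $\alpha$, these faces could accumulate at a point lying on no face-hyperplane. So your claim that local finiteness is never needed is not supported by Step~1 as written.

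\textbf{A repair that avoids local finiteness.} Check the MBM property after deformation, using the tools you already invoke in Step~2.
\begin{enumerate}
\item Fix a K\"ahler class $\omega$. Let $C_0$ have minimal $\omega$-degree among rational curves with $\int_{C_0}\alpha=0$. This minimum exists because, by Bishop's theorem, only finitely many classes have bounded degree.
\item Since $\alpha\in\mathcal C_X$ is big and $\int_{C_0}\alpha=0$, deformations of $C_0$ do not cover $X$. So if some component of the deformation space of $C_0$ had dimension $>2n-2$, a positive-dimensional family of these curves would pass through a fixed point.
\item Bend-and-break would then give $[C_0]=\sum a_i[C_i]$ with $\sum a_i\ge 2$ and each $C_i$ rational. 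Nefness forces $\int_{C_i}\alpha=0$, while $\int_{C_i}\omega<\int_{C_0}\omega$, contradicting the choice of $C_0$. Hence $C_0$ is minimal.
\item Set $z_0:=[C_0]^\vee$. Then $q(z_0)<0$, since $\alpha^\perp\cap H^{1,1}(X,\mathbb R)$ is negative definite.
\item $C_0$ deforms along the Hodge locus of $z_0$. On a very general member $X_t$ with $\operatorname{Pic}(X_t)\otimes\mathbb Q=\mathbb Q z_0$, the deformed curve persists and all rational curves have class in $\mathbb Q z_0$. So $\mathcal K_{X_t}$ is one half of $\mathcal C_{X_t}\setminus z_0^\perp$.
\item Hence $z_0$ is MBM on $X_t$, and therefore on $X$ by \cite[Corollary~5.13]{amerik2015rational}. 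Since $\alpha\in z_0^\perp$, Step~1 follows.
\end{enumerate}

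\textbf{A smaller point in Step~2.} The contradiction needs the limit cycle to lie on $X$ itself, not merely on a birational model, because a K\"ahler class of $X$ need not be positive on curves of a birational model. Run the specialization in the Kuranishi family of $X$. Use properness of the relative cycle space and a Baire argument to fix the multiple of $z$, so that the limit lands on the central fibre $X$.
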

By a theorem of Ran \cite[Corollary 5.2]{ran1995hodge}, if $X$ is a holomorphic symplectic manifold of dimension $2n$ and $C\subset X$ is a rational curve, then every irreducible component of the deformation space of $C$ in $X$ has dimension at least $2n-2$ at $[C]$.
\begin{definition}
Let $X$ be a holomorphic symplectic manifold of dimension $2n$. A rational curve $C\subset X$ is called \emph{minimal} if every irreducible component of the local deformation space of $C$ in $X$ has dimension $2n-2$ at $[C]$.
\end{definition}

\begin{remark}\label{remark:bbf-dual-curve-classes}
Let $X$ be an IHS manifold of dimension $2n$. The Beauville--Bogomolov--Fujiki form
\[
q_X : H^2(X,\mathbb Q) \times H^2(X,\mathbb Q) \longrightarrow \mathbb Q
\]
is nondegenerate. Hence it identifies $H^2(X,\mathbb Q)$ with its dual $H^2(X,\mathbb Q)^\vee$. By Poincaré duality,
\[
H^2(X,\mathbb Q)^\vee \cong H_2(X,\mathbb Q).
\]
Thus the BBF form gives an isomorphism
\[
H^2(X,\mathbb Q) \cong H_2(X,\mathbb Q).
\]
Equivalently, for every curve class $R \in H_2(X,\mathbb Q)$ there is a unique class $R^\vee \in H^2(X,\mathbb Q)$ such that
\[
\alpha \cdot R = q_X(\alpha, R^\vee) \qquad \text{for all } \alpha \in H^2(X,\mathbb Q).
\]
We call $R^\vee$ the \emph{BBF-dual} of $R$. If $C \subset X$ is a curve, we define
\[
q_X([C]) := q_X([C]^\vee),
\]
where $[C]^\vee$ is the BBF-dual of the homology class $[C] \in H_2(X,\mathbb Q)$. Equivalently, one may regard $[C]$ via its Poincaré dual class in $H^{4n-2}(X,\mathbb Q)$ and transport the BBF form to this space by duality. In summary, the curve class, its Poincaré dual, and its BBF-dual are related as follows:
\[
\begin{array}{c}
H^2(X,\mathbb Q) \;\xrightarrow[\alpha\mapsto q_X(\alpha,-)]{\sim}\; H^2(X,\mathbb Q)^\vee \;\xrightarrow[\operatorname{PD}^{-1}]{\sim}\; H_2(X,\mathbb Q)
\end{array}
\]
Thus, for a curve $C \subset X$,
\[
[C] \in H_2(X,\mathbb Q),\qquad
\operatorname{PD}[C] \in H^{4n-2}(X,\mathbb Q),\qquad
[C]^\vee \in H^2(X,\mathbb Q),
\]
where $PD$ stands for Poincare dual and
\[
q_X([C]) = q_X([C]^\vee).
\]
\end{remark}

\begin{definition}[Full MBM locus]
Let $z\in H^2(M,\mathbb Q)$ be an MBM class. The full MBM locus of $z$ is the union of all MBM curves whose BBF-dual classes are proportional to $z$, together with their degenerations.
\end{definition}
\begin{theorem}[\cite{amerik2021contraction}]\label{thm:AV-local-MBM}
Let $X$ be an IHS manifold with $b_2(X)>5$, and let $z\in H^2(X,\mathbb Q)$ be a class with
\[
q(z)<0.
\]
Assume that $z$ is represented by a minimal rational curve on $X$. Let
\[
\mathcal X\longrightarrow B:=\operatorname{Def}(X)
\]
be the Kuranishi family of $X$, with central fiber $\mathcal X_0=X$. After shrinking $B$, we regard $z$ as a flat rational cohomology class on all nearby fibers by parallel transport. Define the local Hodge locus of $z$ by
\[
B_z:=\{t\in B:\ z_t\in H^{1,1}(\mathcal X_t,\mathbb Q)\}.
\]
Then, after shrinking $B_z$ around $0$, the full MBM loci of the classes $z_t$ are real-analytically isomorphic for all $t\in B_z$.
\end{theorem}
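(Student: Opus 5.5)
The plan is to realize the full MBM locus of $z_t$ as the image of a universal family of curves over the relative Douady space. I will then show that this family is equidimensional and proper over $B_z$. Finally I will trivialize it real-analytically along curves in $B_z$.

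\textbf{Step 1: the relative deformation space dominates $B_z$.} Let $C\subset X$ be the minimal rational curve representing (a multiple of the BBF-dual of) $z$. Consider the relative Douady space $\mathcal D\to B$ of curves in the fibers of $\mathcal X\to B$, near $[C]$. A curve in $\mathcal X_t$ with class proportional to $z$ forces $z_t$ to be of type $(1,1)$, so $\mathcal D$ maps into $B_z$. I will show that every component of $\mathcal D$ through $[C]$ has dimension at least $\dim B_z+2n-2$, by refining Ran's argument \cite[Corollary 5.2]{ran1995hodge}. The obstruction space for deforming $C$ together with the complex structure reduces, via contraction with $\sigma$, to the single obstruction $\int_C\sigma$. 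That obstruction vanishes identically on $B_z$, since it is the condition that $z_t$ stays $(1,1)$. Minimality says the fibre over $0$ has dimension exactly $2n-2$. Upper semicontinuity of fibre dimension then gives that each component through $[C]$ is equidimensional of relative dimension $2n-2$ and surjects onto a neighbourhood of $0$ in $B_z$.

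\textbf{Step 2: properness.} Fix a Kähler form $\omega_t$ varying continuously with $t$. The $\omega_t$-area of any curve in class $z_t$ is a cohomological quantity, hence locally bounded. By Bishop's theorem, limits of such curves exist as cycles. The union of the curves together with their degenerations is therefore a closed analytic subset $Z_t\subset\mathcal X_t$, and the family $Z=\bigcup_t Z_t\to B_z$ is proper. Components of $\mathcal D$ coming from non-minimal curves or from other rays proportional to $z$ also have to be included. I will handle them by first replacing $z$ by the primitive generator of the ray. I will then note that only finitely many classes $kz$ have bounded area and can contribute near $0$.

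\textbf{Step 3: local triviality.} Now $Z\to B_z$ is a proper, flat (equidimensional over a reduced base, after normalizing) family of analytic sets. I would first try the twistor approach. Take a hyperkähler metric with Kähler class $\omega\in z^\perp$. All complex structures in its twistor line keep $z$ of type $(1,1)$, because $z\perp\omega,\operatorname{Re}\sigma,\operatorname{Im}\sigma$. Hence these twistor lines lie in $B_z$. The condition $b_2>5$ guarantees that such lines, for varying $\omega$ in the open set $z^\perp\cap\mathcal K$, connect any two points of a small neighbourhood of $0$ in $B_z$. Along a single twistor line, the loci $Z_t$ are cut out by the same real-analytic data, namely the curves of minimal area in the class. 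This should give real-analytic isomorphisms between nearby fibres.

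As a fallback, I would apply a Thom--Mather stratified trivialization of $Z\to B_z$, after checking that the stratification by the types of degenerate curves is equisingular. Equisingularity should follow from Step 1 applied to each stratum.

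\textbf{Main obstacle.} I expect the main obstacle to be Step 3. Flatness gives only topological information, and making the fibres real-analytically isomorphic near the degenerate (reducible, nonreduced) curves requires controlling all strata simultaneously. I would attack this via the twistor-line argument, since that is where the hypothesis $b_2>5$ is used.
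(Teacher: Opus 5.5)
The paper gives no proof of this statement; it is quoted from Amerik--Verbitsky \cite{amerik2021contraction}. Your proposal therefore has to stand on its own, and it has a genuine gap exactly where the conclusion is supposed to come from, namely Step~3.

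\textbf{The twistor lines you need do not exist.} Your twistor argument starts from a hyperk\"ahler metric whose K\"ahler class $\omega$ lies in $z^\perp$, but there is no such class. Since $z$ is represented by the curve $C$, we have $[C]^\vee=\lambda z$ with $\lambda\neq 0$. Hence $\omega\cdot C=\lambda\,q(\omega,z)>0$ for every K\"ahler class $\omega$ on $X$, and so $z^\perp\cap\mathcal K_X=\varnothing$. By your own Step~1 the same holds on every $X_t$ with $t\in B_z$ near $0$, because $z_t$ is still represented by a deformation $C_t$ of $C$. A twistor line through the period of $X_t$ is $\mathbb P(\langle\operatorname{Re}\sigma_t,\operatorname{Im}\sigma_t,\omega\rangle_{\mathbb C})\cap Q_\Lambda$ for a K\"ahler class $\omega$ of $X_t$. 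It lies in $\mathbb P(z^\perp\otimes\mathbb C)\cap Q_\Lambda$, where $z$ stays of type $(1,1)$, only if $\omega\perp z$. So no twistor line through a point of $B_z$ near $0$ keeps $z$ of type $(1,1)$. Indeed, your ``curves of minimal area'' would have area $\omega\cdot C_t=0$. Since $z$ is MBM, the same obstruction holds on every birational model. Any twistor-type argument would therefore have to use classes $\eta\in z^\perp$ that are nef and big but not K\"ahler. That means working with the contraction of the MBM locus and with singular hyperk\"ahler structures on the contracted space. Even then, hyperk\"ahler rotation preserves only trianalytic subvarieties, and rational curves are never trianalytic. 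So ``the loci are cut out by the same real-analytic data'' would need a genuine argument. None of this is in your sketch.

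\textbf{The fallback and Step~2 do not close the gap.} The Thom--Mather isotopy lemma produces stratified topological trivializations, that is, homeomorphisms smooth along strata. It does not produce real-analytic isomorphisms. The equisingularity it requires is asserted, not derived: Step~1 gives dimension counts for deformation spaces, not Whitney-type regularity along the strata of degenerate curves. Your flatness claim is also unjustified. An equidimensional map to a smooth base is flat when the source is Cohen--Macaulay, which $Z$ need not be, and flatness says nothing about the isomorphism type of fibres anyway. Finally, Step~2 does not control the whole full MBM locus. That locus contains curves in every class $kz$, $k\ge 1$, whose areas are unbounded. So ``only finitely many $kz$ have bounded area'' shows neither that the union is a closed analytic subset nor that $Z\to B_z$ is proper. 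You would need to show that curves in high multiples of $z$ sweep out nothing new, for instance by identifying the locus with the exceptional set of a contraction. As written, your proposal contains no mechanism that produces real-analytic isomorphisms between the $Z_t$.
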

\subsection{Cones in poor IHS manifolds }
In this subsection, we discuss the structure of the cones defined above and prove some consequences.
\begin{definition}
    Let $X$ be a compact K\"ahler manifold. The negative part of a pseudo-effective class $\alpha \in H^{1,1 }_{\partial\bar{\partial}}(X, \mathbb{R})$ is defined as $N(\alpha) := \sum \nu(\alpha, D)[D]$. The Zariski projection of $\alpha$ is $Z(\alpha) := \alpha - \{N(\alpha)\}$. We call the decomposition $\alpha = Z(\alpha) + \{N(\alpha)\}$ the divisorial Zariski decomposition of $\alpha$.
\end{definition}
It is shown in \cite{boucksom2004divisorial} that $Z(\alpha) \in \mathcal{MN}_X$ (Proposition 3.8) and that $N(\alpha)$ is effective (Proposition 3.11).
\begin{proposition} \label{proposition:propcone}
    Let $X$ be a poor irreducible holomorphic symplectic manifold. Then
    \[
        \mathcal{P}_X = \mathcal{N}_X.
    \]
\end{proposition}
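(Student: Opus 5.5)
We prove $\mathcal{P}_X=\mathcal{N}_X$ by showing both $\mathcal{P}_X\subset\mathcal{MN}_X$ and $\mathcal{MN}_X\subset\mathcal{N}_X$. The inclusion $\mathcal{N}_X\subset\mathcal{P}_X$ is already recorded in the diagram of cones above.

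\emph{Step 1: $\mathcal{P}_X\subset\mathcal{MN}_X$.} Take $\alpha\in\mathcal{P}_X$ and write its divisorial Zariski decomposition
\[
\alpha=Z(\alpha)+\{N(\alpha)\}, \qquad N(\alpha)=\sum\nu(\alpha,D)[D].
\]
The sum runs over prime divisors $D\subset X$. Since $X$ is poor, it contains no codimension-one subvarieties, so there are no prime divisors and $N(\alpha)=0$. By \cite[Proposition~3.8]{boucksom2004divisorial}, $\alpha=Z(\alpha)\in\mathcal{MN}_X$. One could also argue directly from the definition of $\mathcal{MN}_X$: any positive current $T\in\alpha$ vacuously satisfies $\nu(T,D)=0$ for all prime divisors, and $T\geq 0\geq-\varepsilon\omega$. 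Hence $\mathcal{P}_X=\mathcal{MN}_X=\overline{\mathcal{BK}_X}$.

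\emph{Step 2: $\mathcal{MN}_X=\overline{\mathcal{BK}_X}\subset\mathcal{N}_X$.} First we show that $\mathcal{BK}_X=\mathcal{K}_X$. A nontrivial bimeromorphic map $f\colon X\dashrightarrow X'$ between IHS manifolds is not an isomorphism in codimension~$1$ only if it has exceptional divisors; since $X$ is poor, it has none. Its indeterminacy locus is therefore a nonempty subvariety, which is covered by rational curves: the exceptional loci of bimeromorphic maps of IHS (indeed of $K$-trivial) manifolds are uniruled. This contradicts poorness, so every such $f$ is regular. A bimeromorphic morphism between IHS manifolds is an isomorphism, so $f$ is biregular, and then $f^*\mathcal{K}_{X'}=\mathcal{K}_X$. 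Taking closures gives $\overline{\mathcal{BK}_X}=\mathcal{N}_X$.

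Step 2 could also be reached through Theorem~\ref{theorem:mbmposk}. The K\"ahler cone is a chamber of $\mathcal{C}_X\setminus S^\perp$, where $S$ is the set of MBM classes in $H^{1,1}(X)$. By the results of Amerik--Verbitsky, every MBM class of type $(1,1)$ on $X$ is represented, up to sign, by a rational curve on $X$ or on a birational model. If $X$ has no rational curves, and hence, by the above, no nontrivial birational models, then $S=\emptyset$ and $\mathcal{K}_X=\mathcal{C}_X$. Combined with $\mathcal{MN}_X\subset\overline{\mathcal{C}_X}$, this gives
\[
\mathcal{P}_X=\mathcal{MN}_X\subset\overline{\mathcal{C}_X}=\mathcal{N}_X.
\]

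The main obstacle is Step 2: rigorously excluding birational models, or MBM walls, using only the absence of rational curves and divisors. I would first invoke Huybrechts' result that birational IHS manifolds are connected by limits of graphs, with non-isomorphic loci covered by rational curves. If that route is unavailable in the non-projective case, I would fall back on the Amerik--Verbitsky characterization of MBM classes as classes of extremal rational curves, deformed to $X$ via Theorem~\ref{thm:AV-local-MBM}.
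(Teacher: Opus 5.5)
Your Step~1 is fine. Your direct argument from the definition of $\mathcal{MN}_X$ is even cleaner than the divisorial Zariski decomposition the paper invokes: with no prime divisors, every closed positive current in $\alpha$ qualifies.

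The gap is in Step~2, which carries all the content. Your main route needs the claim that the indeterminacy locus of a bimeromorphic map between compact K\"ahler $K$-trivial manifolds is covered by rational curves. You state this as known but do not prove it, and you yourself flag that you are unsure of it outside the projective case. The fallback via Theorem~\ref{theorem:mbmposk} does not repair this. It explicitly uses ``by the above, no nontrivial birational models,'' so it depends on the same unproved claim. Also, the Amerik--Verbitsky statement you quote drops the monodromy twist built into Definition~\ref{def:mbm}. A minor point: bimeromorphic maps between IHS manifolds are isomorphisms in codimension one simply because $K$ is trivial, so poorness plays no role there.

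The missing idea is the rational-curve description of the K\"ahler cone of an IHS manifold (Boucksom, Huybrechts): $\mathcal{K}_X$ is cut out of $\mathcal{C}_X$ by the classes of rational curves \cite[Corollary~3.3]{huybrechts2003kahler}. A poor $X$ has no rational curves, so this gives $\mathcal{K}_X=\mathcal{C}_X$ immediately, and hence $\mathcal{MN}_X=\overline{\mathcal{BK}_X}\subset\overline{\mathcal{C}_X}=\mathcal{N}_X$. This is exactly how the paper concludes, and birational models never enter.

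If you want to keep your route instead, you must supply the uniruledness argument. For example:
\begin{enumerate}
\item If $f$ is not an isomorphism, then $f^{-1}\colon X'\dashrightarrow X$ is not a morphism, since a bimeromorphic morphism between IHS manifolds is an isomorphism.
\item Resolve $f^{-1}$ by a sequence of blow-ups $p'\colon Z\to X'$ with smooth centres, and let $p\colon Z\to X$ be the induced map.
\item Over an indeterminacy point $x'$, the fibre $p'^{-1}(x')$ is a connected union of rational curves. Its image under $p$ is positive-dimensional, by normality of $X'$.
\item Hence some of these curves map to rational curves in $X$, contradicting poorness.
\end{enumerate}
That works, but it is substantially more than the one-line citation the paper uses.
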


\begin{proof}
    Since $X$ is poor, by \cite[Corollary~3.3]{huybrechts2003kahler} we have
    \[
        \mathcal{C}_X = \mathcal{K}_X,
    \]
    hence
    \[
        \mathcal{K}_X = \mathcal{BK}_X = \mathcal{C}_X.
    \]

    Let $\alpha$ be a pseudoeffective class. By the divisorial Zariski decomposition, we can write
    \[
        \alpha = Z(\alpha) + \{N(\alpha)\},
    \]
    where $Z(\alpha)$ is the positive (or “movable”) part and $N(\alpha)$ is the negative part, supported on divisors. Since $X$ is poor, there are no divisors, and thus
    \[
        \{N(\alpha)\} = 0.
    \]
    Therefore, $\alpha = Z(\alpha)$ is modified nef, see \cite[Proposition 3.8]{boucksom2004divisorial}. In other words,
    \[
        \mathcal{P}_X \subset \mathcal{MN}_X.
    \]
    The reverse inclusion $\mathcal{MN}_X \subset \mathcal{P}_X$ holds by definition, since modified nef classes are pseudo\-effective. Hence
    \[
        \mathcal{P}_X = \mathcal{MN}_X.
    \]

    Finally, by the usual equalities for an IHS manifold,
    \[
        \mathcal{MN}_X = \overline{\mathcal{BK}_X} = \overline{\mathcal{C}_X} = \mathcal{N}_X,
    \]
    so $\mathcal{P}_X = \mathcal{N}_X$ as claimed.
\end{proof}

\begin{corollary}\label{corollary:concor}
    If $X$ is poor, then
    \[
        \mathcal{MN}_X = \overline{\mathcal{BK}_X}
        = \overline{\mathcal{C}_X}
        = \mathcal{N}_X
        \quad\text{and}\quad
        \mathcal{MK}_X = \mathcal{K}_X.
    \]
\end{corollary}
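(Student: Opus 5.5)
The first chain of equalities is already contained in the proof of Proposition~\ref{proposition:propcone}; I would simply extract it. Because $X$ is poor, \cite[Corollary~3.3]{huybrechts2003kahler} gives $\mathcal{K}_X=\mathcal{C}_X$. Since $\mathcal{K}_X\subset\mathcal{BK}_X\subset\mathcal{C}_X$, this forces $\mathcal{BK}_X=\mathcal{C}_X=\mathcal{K}_X$. Taking closures yields $\overline{\mathcal{BK}_X}=\overline{\mathcal{C}_X}=\overline{\mathcal{K}_X}=\mathcal{N}_X$. By Boucksom and Huybrechts, $\mathcal{MN}_X=\overline{\mathcal{BK}_X}$ holds for any IHS manifold. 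Together these give the first statement. Proposition~\ref{proposition:propcone} then adds that all of these cones also equal $\mathcal{P}_X$.

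For the second statement, the inclusion $\mathcal{K}_X\subset\mathcal{MK}_X$ is immediate: a K\"ahler form is a K\"ahler current with vanishing Lelong numbers everywhere. For the reverse inclusion, let $\alpha\in\mathcal{MK}_X$. Then $\alpha$ contains a K\"ahler current $T\geq\varepsilon\omega$, so $\alpha-\varepsilon\{\omega\}$ is pseudoeffective. Proposition~\ref{proposition:propcone} gives $\alpha-\varepsilon\{\omega\}\in\mathcal{N}_X$. Hence $\alpha\in\mathcal{N}_X+\varepsilon\mathcal{K}_X\subset\mathcal{K}_X$, using the standard fact that nef plus K\"ahler is K\"ahler. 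This closes the argument and never needs the Lelong number condition along divisors, which is in any case vacuous here since $X$ has no divisors.

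An alternative route would be to use Boucksom's description of $\mathcal{MK}_X$ as the interior of $\mathcal{MN}_X$ together with the fact that $\mathcal{K}_X$ is open and convex with closure $\mathcal{N}_X$, so that $\operatorname{int}\mathcal{N}_X=\mathcal{K}_X$. The direct argument above avoids relying on that interior identification, so I would present it.

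The only step that needs care is the K\"ahler-current step: checking that the class of a K\"ahler current minus a small K\"ahler class is pseudoeffective. This holds because $T-\varepsilon\omega\geq0$ as a current. Everything else is bookkeeping with inclusions already recorded in the diagram.
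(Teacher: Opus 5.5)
Your proof is correct. The first chain of equalities is obtained exactly as in the paper: Huybrechts gives $\mathcal{K}_X=\mathcal{C}_X$, this squeezes $\mathcal{BK}_X$, you take closures, and you use $\mathcal{MN}_X=\overline{\mathcal{BK}_X}$.

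For $\mathcal{MK}_X=\mathcal{K}_X$ your route differs from the paper's.
\begin{itemize}
\item The paper argues that modified K\"ahler classes are big, hence lie in $\mathcal{C}_X=\mathcal{K}_X$.
\item You subtract $\varepsilon\{\omega\}$ from a K\"ahler current to land in $\mathcal{P}_X$. You then use $\mathcal{P}_X=\mathcal{N}_X$ from Proposition~\ref{proposition:propcone}, and finish with nef $+$ K\"ahler $\subset$ K\"ahler.
\end{itemize}

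Your version is the more careful one. On a general IHS manifold, bigness alone does not put a class in the positive cone. For example, on a K3 surface with a $(-2)$-curve $E$ and an ample $H$, the class $H+kE$ is big but has square $H^2+2k\,H\cdot E-2k^2<0$ for $k\gg0$. So the paper's one-line step silently needs one of two inputs:
\begin{itemize}
\item Boucksom's general inclusion $\mathcal{MK}_X=\operatorname{int}\mathcal{MN}_X\subset\operatorname{int}\overline{\mathcal{C}_X}=\mathcal{C}_X$; or
\item the poor-case equality $\mathcal{P}_X=\mathcal{N}_X$, which makes the big cone $\operatorname{int}\mathcal{P}_X$ equal to $\mathcal{C}_X$.
\end{itemize}

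Your argument supplies the second input explicitly and is self-contained given Proposition~\ref{proposition:propcone}. The trade-off is that it inherits that proposition's dependence on the divisorial Zariski decomposition. The repaired paper route needs only $\mathcal{C}_X=\mathcal{K}_X$ together with Boucksom's description of $\mathcal{MK}_X$ for IHS manifolds.
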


\begin{proof}
    The first chain of equalities follows from Proposition~\ref{proposition:propcone} and the identities
    \[
        \mathcal{MN}_X = \overline{\mathcal{BK}_X}
        \quad\text{and}\quad
        \overline{\mathcal{C}_X} = \mathcal{N}_X.
    \]
    For the second statement, note that modified K\"ahler classes are big, hence lie in the positive cone $\mathcal{C}_X$. Since $X$ is poor, we have $\mathcal{C}_X = \mathcal{K}_X$, so any modified K\"ahler class lies in the K\"ahler cone. Thus
    \[
        \mathcal{MK}_X \subset \mathcal{K}_X.
    \]
    The reverse inclusion $\mathcal{K}_X \subset \mathcal{MK}_X$ is clear because every K\"ahler class is, in particular, modified K\"ahler. Therefore $\mathcal{MK}_X = \mathcal{K}_X$.
\end{proof}
Using this description of cones, we can arrive at some interesting conclusions.
\begin{proposition}\label{prop:negative-square-cohom}
    Let $X$ be a poor IHS manifold of complex dimension $2n$. Let $L$ be a line bundle on $X$ such that $q\big(c_1(L)\big) < 0$. Then
    \[
        H^i(X,L) = 0 \quad \text{for all } i \neq n,
    \]
    and in particular
    \[
        \dim H^n(X,L) = (-1)^n \chi(X,L).
    \]
\end{proposition}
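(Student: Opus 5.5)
The plan is to place $c_1(L)$ in case (3) of Theorem~\ref{theorem:cohom}. If $c_1(L)$ lies in neither $\overline{\check{\mathcal K}_X}$ nor $-\overline{\check{\mathcal K}_X}$, that theorem gives $H^i(X,L)=0$ for all $i\neq \tfrac{\dim_{\mathbb C}X}{2}=n$ at once. Note that $c_1(L)\neq 0$, since $q(c_1(L))<0$, so Theorem~\ref{theorem:cohom} applies. The Euler characteristic identity then follows immediately: $\chi(X,L)=\sum_i(-1)^i h^i(X,L)=(-1)^n h^n(X,L)$.

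The main step is to show $c_1(L)\notin \pm\overline{\check{\mathcal K}_X}$. Since $X$ is poor, Corollary~\ref{corollary:concor} (via \cite[Corollary~3.3]{huybrechts2003kahler}) gives $\mathcal K_X=\mathcal C_X$. Hence $\overline{\check{\mathcal K}_X}$ is the closed dual of the positive cone $\mathcal C_X$ with respect to $q$, restricted to $H^{1,1}(X,\mathbb R)$. The form $q$ on $H^{1,1}(X,\mathbb R)$ has signature $(1,h^{1,1}-1)$, so this is a Lorentzian space. There the dual of the open positive cone is the closed positive cone $\overline{\mathcal C_X}$ itself. The standard argument runs as follows. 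Fix a Kähler class $h$ and write $x=ah+w$ with $w\in h^\perp$, so that $w$ lies in the negative definite part. The condition $q(x,y)\ge 0$ for every $y=h+u$ with $q(u)<q(h)$, $u\in h^\perp$, forces $a\,q(h)\ge \sqrt{q(h)}\,\sqrt{-q(w)}$ by Cauchy--Schwarz on $h^\perp$. This is exactly $q(x)\ge 0$ with $a\ge 0$. Consequently every element of $\pm\overline{\check{\mathcal K}_X}$ has $q\ge 0$.

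Since $q(c_1(L))<0$, the class $c_1(L)$ lies outside $\overline{\check{\mathcal K}_X}\cup-\overline{\check{\mathcal K}_X}$, and case (3) of Theorem~\ref{theorem:cohom} gives the vanishing. The only point needing care is the Lorentzian self-duality computation, in particular checking it with closures: we need the closed dual of the open cone $\mathcal C_X$ to be $\overline{\mathcal C_X}$. This is elementary linear algebra. An alternative is to argue directly: if $q(x,y)\ge0$ for all $y\in\mathcal C_X$ and $q(x)<0$, choose $y\in\mathcal C_X$ by perturbing $h$ in the direction of $-w$ to make $q(x,y)<0$, a contradiction. I expect no real obstacle beyond this.
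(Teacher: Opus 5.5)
Your proposal is correct and follows essentially the same route as the paper. Both arguments use poorness to get $\mathcal K_X=\mathcal C_X$, hence $\overline{\check{\mathcal K}_X}=\overline{\mathcal C_X}$ by Lorentzian self-duality of the positive cone; then $q\ge 0$ on $\pm\overline{\check{\mathcal K}_X}$, so $c_1(L)$ falls into case~(3) of Theorem~\ref{theorem:cohom}. Your Cauchy--Schwarz verification of self-duality only spells out a step the paper simply asserts; note that the constraint on $u$ should read $-q(u)<q(h)$, since $q(u)\le 0$ on $h^\perp$ and $q(u)<q(h)$ as written holds for every $u$.
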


\begin{proof}
        Since $X$ is poor, Corollary~\ref{corollary:concor} implies that every class in the closure of the positive cone $\overline{\mathcal{C}}_X$ can be approximated by Kähler classes. More precisely,
        \[
            \overline{\mathcal{K}}_X = \overline{\mathcal{C}}_X.
        \]
        The closed dual cone only depends on the closure of the cone we start with, so
        \[
            \overline{\check{\mathcal{K}}_X}
            = \left\{ x \,\middle|\, \forall y \in \overline{\mathcal{K}}_X,\ q(x,y) \ge 0 \right\}
            = \left\{ x \,\middle|\, \forall y \in \overline{\mathcal{C}}_X,\ q(x,y) \ge 0 \right\}
            = \overline{\check{\mathcal{C}}_X}.
        \]
        On the other hand, the positive cone $\mathcal{C}_X$ is self-dual with respect to $q$, so
        \[
            \overline{\check{\mathcal{C}}_X} = \overline{\mathcal{C}}_X.
        \]
        Combining these equalities, we obtain
        \[
            \overline{\check{\mathcal{K}}_X} = \overline{\mathcal{C}}_X.
        \]

        By definition of the positive cone, $q(x) > 0$ for all $x \in \mathcal{C}_X$, and hence
        \[
            q(x) \ge 0 \quad \text{for all } x \in \overline{\mathcal{C}}_X = \overline{\check{\mathcal{K}}_X},
        \]
        with equality only on the boundary where $x$ is isotropic. The same holds for $-x$ on $-\overline{\check{\mathcal{K}}_X}$, since $q(-x) = q(x)$.

        Now assume $q\big(c_1(L)\big) < 0$. Then both $c_1(L)$ and $-c_1(L)$ have strictly negative square with respect to $q$, so neither $c_1(L)$ nor $-c_1(L)$ can lie in $\overline{\check{\mathcal{K}}_X}$ (or in $-\overline{\check{\mathcal{K}}_X}$). In other words,
        \[
            c_1(L) \notin \overline{\check{\mathcal{K}}_X} \cup \big(-\overline{\check{\mathcal{K}}_X}\big).
        \]
        Thus $c_1(L)$ falls into case~(3) of Theorem~\ref{theorem:cohom}. Since $\dim_{\mathbb{C}} X = 2n$, we conclude that 
        \[
            H^i(X,L) = 0 \quad \text{for all } i \neq n.
        \]

        Finally, by the definition of the Euler characteristic,
        \[
            \chi(X,L) = \sum_{i=0}^{2n} (-1)^i \dim H^i(X,L)
            = (-1)^n \dim H^n(X,L),
        \]
        so $\dim H^n(X,L) = (-1)^n \chi(X,L)$, as claimed.
\end{proof}
\subsection{MBM--Riemann--Roch criterion}
We are now ready to prove the main result. First, let us fix some notation and some facts about the period map for IHS manifold. Let $\Lambda$ be a lattice of signature $(3,b-3)$, where $b\ge 3$. Define
\[
\mathfrak{M}_{\Lambda}
:= \{(X,\varphi)\}/\sim,
\]
where $(X,\varphi)$ is a marked irreducible symplectic manifold (we usually also fix the dimension $2n$), and where $(X,\varphi)\sim (X',\varphi')$ if and only if there exists an isomorphism $f\colon X \xrightarrow{\sim} X'$ such that
\[
f^{*} = \pm\bigl(\varphi^{-1}\circ \varphi'\bigr).
\]
The Local Torelli Theorem (\cite{beauville1983varietes}) allows one to patch the local charts $\operatorname{Def}(X)$. Thus, $\mathfrak{M}_{\Lambda}$ carries the structure of a non-separated (i.e.\ non-Hausdorff) complex manifold. The period map can be considered as a holomorphic map
\[
\mathcal{P}\colon \mathfrak{M}_{\Lambda}\longrightarrow Q_{\Lambda} := \{x \ | \ x\cdot x =0 , \ x\cdot \bar{x}>0\} \subset \mathbf{P}(\Lambda_{\mathbb{C}}).
\]
Let $\mathfrak{M}^0_{\Lambda}$ denote a connected component of $\mathfrak{M}_{\Lambda}$. Then the period map is surjective when restricted to $\mathfrak{M}^0_{\Lambda}$ (\cite{HuybrechtsErratum}, \cite{huybrechts1997compact}).
\begin{definition}
    An integral lattice $\Lambda$ is called a lattice of IHS manifold type if there exists an irreducible holomorphic symplectic manifold $X$ with an isomorphism
    \[
        \phi: H^2(X,\mathbb{Z}) \xrightarrow{\cong} \Lambda.
    \]
\end{definition}

Let $\Lambda$ be an integral lattice of IHS manifold type. Suppose $z$ is a primitive vector in $\Lambda$. Then consider the real hyperplane
\[
z^{\perp} := \{ \sigma \in \Lambda_{\mathbb{R}} \mid q(\sigma,z) = 0 \}.
\]
The restriction of $q$ to $z^{\perp}$ has the signature $(3,b_2-4)$ or $(2,b_2-3)$. In particular, $z^{\perp}$ contains positive $2$-planes, and hence
\[
Q_{\Lambda} \cap \mathbb{P}(z^{\perp} \otimes \mathbb{C}) \neq \varnothing.
\]
By surjectivity of the period map on the connected component $\mathfrak{M}^0_{\Lambda}$, there exists a marked IHS manifold $(Y,\psi) \in \mathfrak{M}^0_{\Lambda}$ such that
\[
\mathcal{P}([(Y,\psi)]) \in
Q_{\Lambda} \cap \mathbb{P}(z^{\perp} \otimes \mathbb{C}).
\] 
From the exponential sequence and the simply connectedness of IHS manifolds, there exists  a line bundle $L$ on $Y$ such that $c_1(L) = z$, under the identification $H^2(Y,\mathbb{Z}) \simeq \Lambda$. Then we have
\[
\chi(Y,L) = \sum_{i=0}^n \frac{a_i}{(2i)!}q(c_1(L))^i.
\]
Then by the Fujiki relations, the constants $a_i$ depend only on the topology of $Y$. Therefore, they are independent of the choice $(Y,\psi) \in \mathfrak{M}^o(\Lambda)$. We have  
\[
\chi(Y,L^{\otimes m}) = \sum_{i=0}^n \frac{a_i}{(2i)!}q(z)^i m^{2i}.
\] 
Now, define
\[
P_{\mathfrak{M}^o(\Lambda)}^z (x) := \sum_{i=0}^n \frac{a_i}{(2i)!}q(z)^i x^{i},
\]
\[
RR_X(t):=\sum_{i=0}^n\frac{a_i}{(2i)!}t^i.
\]
The last polynomial is called the Riemann--Roch polynomial. Observe that $P_{\mathfrak{M}^o(\Lambda)}^z (0) = n+1$. Therefore, $0$ is never a root. In \cite{Jiang2020PositivityOR}, it was observed that if a real root exists for $RR(t)$, then it has to be a negative number.

\begin{proof}[Proof of Theorem \ref{Theorem:thmMBMclass}]
By the surjectivity of the period map, we can choose $(Y,\psi)$ such that there exists a line bundle $L$ such that $\psi(c_1(L)) =z$. If the Picard rank is not equal to $1$, then we can deform $(Y,\psi)$ to $(Y_0,\psi_0)$ such that it has Picard rank $1$ and $z$ is of type $(1,1)$ and there exists a line bundle $L_0$ on $Y_0$ such that $\psi_0(c_1(L_0)) =z$ (for details, see \cite{huybrechts1997compact}). Since $\rho(Y_0)=1$ and $q(z)<0$, the lattice $\operatorname{Pic}(Y_0)\otimes \mathbb Q$ is generated by a class of negative Beauville--Bogomolov square. Hence $\operatorname{Pic}(Y_0)$ is negative definite, so $Y_0$ is elliptic. Observe that if $z$ is MBM on $Y_0$, then it is MBM whenever it is of type $(1,1)$; due to \cite[Corollary 5.13]{amerik2015rational}. Assume for a contradiction that $z$ is not MBM. Since the Picard rank is $1$ and $z$ generates $H^{1,1}(Y_0,\mathbb Q)$, every rational $(1,1)$-class on $Y_0$ is proportional to $z$. As the MBM property is invariant under multiplication by a nonzero rational number, the assumption that $z$ is not MBM implies that $Y_0$ has no MBM classes. We make the following claim.
    \begin{claim}
        If $Y_0$ is an elliptic IHS manifold with Picard rank $1$ and does not have any MBM classes then $Y_0$ is poor.
    \end{claim}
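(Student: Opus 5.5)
We need to show two things: $Y_0$ contains no codimension-one subvarieties, and $Y_0$ contains no rational curves. We treat them separately, working throughout with the fact that $H^{1,1}(Y_0,\mathbb Q)=\mathbb Q z$ with $q(z)<0$.

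\emph{No divisors.} Let $D\subset Y_0$ be a prime divisor. Its class $[D]$ is a nonzero integral $(1,1)$-class, so $[D]=\lambda z$ for some $\lambda\in\mathbb Q^{\times}$, and hence $q([D])<0$. By Boucksom's results \cite{boucksom2004divisorial} (or Markman's), a prime exceptional divisor on an IHS manifold is uniruled. Its class, or twice it, is then the BBF-dual of a multiple of the class of a general ruling curve, and it is known to be MBM: the hyperplane $[D]^\perp$ cuts out a wall of a birational Kähler chamber. This contradicts the assumption that $Y_0$ has no MBM classes. A more self-contained route is available. Any nonzero class $\alpha\in\mathcal C_X$ satisfies $q(\alpha,[D])\ge 0$ for $\alpha$ Kähler, since $[D]$ is effective. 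Since $\overline{\mathcal C}$ is self-dual, we would like to conclude $[D]\in\overline{\mathcal C}$, contradicting $q([D])<0$. This requires $\mathcal K_{Y_0}=\mathcal C_{Y_0}$. By Theorem~\ref{theorem:mbmposk}, the Kähler cone is a connected component of $\mathcal C_{Y_0}\setminus S^\perp$, and $S=\emptyset$ gives $\mathcal K_{Y_0}=\mathcal C_{Y_0}$. So the argument is: no MBM classes implies $\mathcal K_{Y_0}=\mathcal C_{Y_0}$, and hence every effective class lies in the dual of $\mathcal C_{Y_0}$, which is $\overline{\mathcal C}_{Y_0}$, forcing $q([D])\ge 0$.

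\emph{No rational curves.} We use the same cone argument for curves. Let $C\subset Y_0$ be any curve, with BBF-dual class $[C]^\vee$ as in Remark~\ref{remark:bbf-dual-curve-classes}. For a Kähler class $\omega$ we have $q(\omega,[C]^\vee)=\int_C\omega>0$. Since $\mathcal K_{Y_0}=\mathcal C_{Y_0}$, this gives $[C]^\vee\in\overline{\check{\mathcal C}}_{Y_0}=\overline{\mathcal C}_{Y_0}$, and in particular $q([C]^\vee)\ge 0$. On the other hand, $[C]^\vee$ is a rational class of type $(1,1)$, because a curve class pairs trivially with $\sigma$ and $\bar\sigma$. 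Hence $[C]^\vee\in\mathbb Q z$, and it is nonzero because $\int_C\omega>0$. This gives $q([C]^\vee)<0$, a contradiction. So $Y_0$ contains no curves at all, in particular no rational curves, and also no divisors by the previous paragraph. Therefore $Y_0$ is poor.

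\emph{Main obstacle.} The key step is the application of Theorem~\ref{theorem:mbmposk} to get $\mathcal K_{Y_0}=\mathcal C_{Y_0}$ when there are no MBM classes. One must check that "no MBM classes" means $S=\emptyset$ as a subset of $H^{1,1}(Y_0)$, which includes all real multiples; this holds since MBM classes are rational and invariant under rational scaling. Once this is in place, both the divisor and curve cases reduce to the self-duality of $\overline{\mathcal C}$, the same mechanism used in the proof of Proposition~\ref{prop:negative-square-cohom}. The only remaining care is to verify that the BBF-dual of a curve class is of type $(1,1)$, so that it lies in $\mathbb Q z$.
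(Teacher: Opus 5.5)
Your proof is correct, but it takes a different route from the paper's.

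\textbf{The paper's route.} It argues through rational curves directly. If $C$ is a rational curve, its BBF-dual lies in $\mathbb{Q}z$, so $\pm z$ is $\mathbb{Q}$-effective, and \cite[Theorem~5.10]{amerik2015rational} makes $z$ MBM. If $D$ is a prime divisor, ellipticity gives $q([D])<0$, so $D$ is uniruled by \cite[Proposition~4.7]{boucksom2004divisorial*}. This produces a rational curve and reduces to the first case.

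\textbf{Your route.} You use the no-MBM hypothesis only once, to get $\mathcal K_{Y_0}=\mathcal C_{Y_0}$ from Theorem~\ref{theorem:mbmposk}. You then apply self-duality of $\overline{\mathcal C}_{Y_0}$ in the Lorentzian space $H^{1,1}(Y_0,\mathbb R)$. Both the BBF-dual of any curve and the class of any divisor pair positively with every K\"ahler class. Here the BBF-dual is of type $(1,1)$ because $\int_C\sigma=\int_C\bar\sigma=0$. Hence both classes have $q\ge 0$, which contradicts negative definiteness of the Picard lattice. This is the same mechanism the paper uses in Proposition~\ref{prop:negative-square-cohom} and Theorem~\ref{theorem:thstb}.

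\textbf{What each approach buys.} Your route avoids both the $\mathbb{Q}$-effectivity criterion for MBM classes and the uniruledness of negative divisors. It gives the stronger conclusion that $Y_0$ contains no curves at all, which the paper only obtains later via Theorem~\ref{theorem:nocurvespoor}. It also needs only ellipticity, not $\rho=1$. The paper's route keeps the geometric link ``rational curve $\Rightarrow$ MBM'' in view, and that link is reused in Example~\ref{exp:AVexample}. Note that Theorem~\ref{theorem:mbmposk} itself rests on the Huybrechts--Boucksom description of the K\"ahler cone by rational curves, so your route moves the depth into that theorem rather than removing it.

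\textbf{Two small fixes.}
\begin{enumerate}
\item Justify $q(\omega,[D])>0$ for K\"ahler $\omega$ via Fujiki's relation
\[
\int_X[D]\wedge\omega^{2n-1}=c_X\,q(\omega)^{n-1}q([D],\omega),
\]
as in the proof of Theorem~\ref{theorem:thstb}. ``Since $[D]$ is effective'' alone does not say why the BBF pairing is positive.
\item Drop your first sketched route for divisors, the claim that prime exceptional divisors are MBM. It is stated loosely and is not needed once you have the cone argument.
\end{enumerate}
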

    \begin{proof}
        Suppose that $Y_0$ contains a rational curve $C$. Then its cohomology class satisfies
        \[
        [C]\in H^{2n-1,2n-1}(Y_0,\mathbb{Q}).
        \]
        By Remark~\ref{remark:bbf-dual-curve-classes}, the class $[C]$ corresponds to a rational $(1,1)$-class on $Y_0$. Since $\rho(Y_0)=1$ and $z$ is of type $(1,1)$, the space $H^{1,1}(Y_0,\mathbb{Q})$ is generated by $z$. Hence there exists $r\in\mathbb{Q}_{>0}$ such that the class corresponding to $[C]$ is $rz$. In particular, $\pm z$ is $\mathbb{Q}$-effective (in the sense of \cite[Definition 1.12]{amerik2015rational}). Therefore, by \cite[Theorem~5.10]{amerik2015rational}, the class $z$ is MBM, a contradiction. If, instead, it contains a codimension-one subvariety but no rational curves, let $D$ be an irreducible component. Let $\mathcal{O}_{Y_0}(D)$ be the associated line bundle. $Y_0$ is elliptic, so $q(c_1(\mathcal{O}_{Y_0}(D)))<0$, then by \cite[Proposition 4.7]{boucksom2004divisorial}, $Y_0$ admits a rational curve. Again, we have a contradiction.
    \end{proof}
    By the above claim, $Y_0$ is poor. By the definition of $P_{\mathfrak{M}^o(\Lambda)}^z(x)$ and the Riemann--Roch formula, one has
    \[
    P_{\mathfrak{M}^o(\Lambda)}^z(k^2)=\chi(Y_0,L_0^{\otimes k})
    \]
    for every integer $k$. Since $q(c_1(L_0^{\otimes k}))=k^2q(z)<0$ for $k\neq 0$, Proposition~\ref{prop:negative-square-cohom} gives
    \[
    \chi(Y_0,L_0^{\otimes k})=(-1)^n h^n(Y_0,L_0^{\otimes k}) \qquad\text{for all }k\neq 0.
    \]
    Also,
    \[
    P_{\mathfrak{M}^o(\Lambda)}^z(0)=\chi(Y_0,\mathcal O_{Y_0})=n+1>0.
    \] 
    So the sign of the polynomial behaves in the following way.
\newcommand{\signpicture}[2]{
\resizebox{#2\linewidth}{!}{%
\begin{tikzpicture}[>=Stealth, x=1.2cm, y=0.8cm, baseline={(current bounding box.center)}]
  \coordinate (O)  at (0,0);
  \coordinate (g1) at (2,0);
  \coordinate (g2) at (4,0);
  \coordinate (gn) at (7,0);
  \coordinate (R)  at (9.8,0);

  \draw[thick,->] (O) -- (R);

  \foreach \P/\lab in {O/0,g1/r_1,g2/r_2,gn/r_m}{
    \draw (\P) circle (2pt);
    \node[below] at (\P) {$\lab$};
  }

  \path (O)  -- (g1) node[midway,above] {$+$};
  \path (g1) -- (g2) node[midway,above] {$-$};
  \path (g2) -- (gn) node[midway,above] {$\cdots$};
  \path (g2) -- (gn) node[midway,below] {$\cdots$};
  \path (gn) -- (R)  node[midway,above] {$#1$};

  \node[right] at (R) {$.$};
\end{tikzpicture}}%
}

\begin{center}
\renewcommand{\arraystretch}{1.2}
\setlength{\tabcolsep}{10pt}
\begin{tabular}{|c|c|}
\hline
$m$ is even. & $m$ is odd. \\
\hline
\signpicture{+}{0.45} & \signpicture{-}{0.45} \\
\hline
\end{tabular}
\end{center}
Now, we look at each case.
\begin{description}
    \item[Case 1] If $n,m$ are even, then we have $P_{\mathfrak{M}^o(\Lambda)}^z (1) \geq 0$. If $ 1\in (r_{1},r_{2}) \cup (r_{3},r_{4}) \cup \cdots \cup (r_{m-1},r_{m})$, we have a contradiction from the above table.
    \item[Case 2] $n$ is even and $m$ is odd. This case cannot occur. Indeed, if $x\in \mathbb{R}$ is a root of $P_{\mathfrak{M}^o(\Lambda)}^z(x)$, then $q(z)x$ is a real root of the corresponding Riemann--Roch polynomial. By \cite{Jiang2020PositivityOR}, every real root of the Riemann--Roch polynomial is negative. Since $q(z)<0$, every real root of $P_{\mathfrak{M}^o(\Lambda)}^z(x)$ is positive. Thus $m$ is the total number of real roots. The remaining $n-m$ roots are nonreal and occur in complex conjugate pairs, so $n-m$ must be even. This contradicts the fact that $n$ is even and $m$ is odd.
    \item[Case 3] $n,m$ are odd, then we have $P_{\mathfrak{M}^o(\Lambda)}^z (1) \leq 0$. If $1 \in (0,r_{1}) \cup (r_{2},r_{3}) \cup \cdots \cup (r_{m-1},r_{m})$, we have a contradiction from the above table.
    \item[Case 4] $n$ is odd and $m$ is even. This case cannot occur for the same reason as Case 2: the nonreal roots occur in conjugate pairs, so $n-m$ must be even, which is impossible if $n$ is odd and $m$ is even.
\end{description}
Therefore, in each case $z$ is MBM whenever it is of the type $(1,1)$.
\end{proof}

\begin{corollary}[Root criterion for existence of rational curves] \label{cor:rationalcuexc}
Let $X$ be an irreducible holomorphic symplectic manifold, and let
\[
z\in H^{1,1}(X,\mathbb Z)
\]
be a primitive class with $q(z)<0$. Suppose that $z$ satisfies the root condition
of Theorem \ref{Theorem:thmMBMclass}. Then $X$ contains a rational curve.
\end{corollary}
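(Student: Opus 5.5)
The plan is to apply Theorem~\ref{Theorem:thmMBMclass} to $X$ and then convert the resulting MBM property of $z$ into a rational curve. Fix a marking $\phi\colon H^2(X,\mathbb Z)\to\Lambda$ and let $\mathfrak{M}^o(\Lambda)$ be the connected component containing $(X,\phi)$. Then $\phi(z)$ is a primitive negative vector of $\Lambda$. The polynomial $P^z_{\mathfrak{M}^o(\Lambda)}$ depends only on $q(z)$ and on the constants $a_{2i}$, which are topological. Hence the root hypothesis, which is assumed, is exactly the hypothesis of Theorem~\ref{Theorem:thmMBMclass}. Since $z\in H^{1,1}(X,\mathbb Z)$, the theorem gives that $z$ is an MBM class on $X$.

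The second step is to produce a rational curve from the MBM class. By Definition~\ref{def:mbm}, there are a monodromy element $\gamma$ and a birational model $X'$ of $X$ such that $\gamma(z)^\perp$ contains a face of $\mathcal{K}_{X'}$. In particular, the Kähler cone $\mathcal{K}_{X'}$ is strictly smaller than the positive cone $\mathcal{C}_{X'}$, since the hyperplane $\gamma(z)^\perp$ meets $\mathcal{C}_{X'}$ (Theorem~\ref{theorem:mbmposk}).
\begin{itemize}
\item By \cite[Corollary~3.3]{huybrechts2003kahler}, an IHS manifold with no rational curves has $\mathcal{K}=\mathcal{C}$. Therefore $X'$ contains a rational curve.
\item If $X'\cong X$, we are done.
\item If $X'\not\cong X$, a non-isomorphic bimeromorphic map between IHS manifolds is not biregular, so $X$ has a nonempty exceptional locus. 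That locus is covered by rational curves, by the Huybrechts--Wierzba description of bimeromorphic maps between hyperkähler manifolds.
\end{itemize}
I would argue more directly. Suppose $X$ has no rational curves. The exceptional locus of a bimeromorphic map cannot have codimension one in $X$, because Proposition~4.7 of \cite{boucksom2004divisorial}, as used in the claim above, produces rational curves from exceptional divisors. So if $X$ has no codimension-one subvarieties either, then $X$ is poor. By Corollary~\ref{corollary:concor}, $\mathcal{BK}_X=\mathcal{K}_X=\mathcal{C}_X$. This contradicts the fact that the MBM wall $z^\perp$ meets $\mathcal{C}_X$, since by \cite{amerik2015rational} MBM walls separate Kähler chambers of birational models inside the positive cone.

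The main obstacle is the remaining case, where $X$ has no rational curves but does contain a codimension-one subvariety $D$. Here I would argue as follows.
\begin{itemize}
\item A prime divisor $D$ has nonzero class, so $c_1(\mathcal O(D))$ is a nonzero integral $(1,1)$-class.
\item If $q(D)<0$, then \cite[Proposition~4.7]{boucksom2004divisorial} gives a rational curve covering $D$, a contradiction.
\item If $q(D)\ge 0$, then $D$ is pseudo-effective, and since $X$ has no rational curves we still have $\mathcal{K}_X=\mathcal{C}_X$ by \cite{huybrechts2003kahler}.
\end{itemize}
In the last case, since $z$ is MBM, $z^\perp$ must meet $\mathcal{C}_X=\mathcal{K}_X$, and \cite[Theorem~1.3 style]{amerik2015rational} forbids this. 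Indeed, Theorem~\ref{theorem:mbmposk} says the Kähler cone is a connected component of $\mathcal{C}_X\setminus S^\perp$, so no MBM hyperplane cuts through it. This again gives a contradiction. In fact this last argument handles all cases uniformly: $z$ being MBM of type $(1,1)$ with $q(z)<0$ means $z^\perp\cap\mathcal{C}_X\neq\varnothing$, since the signature of $H^{1,1}$ is $(1,\rho-1)$ and $z^\perp$ contains positive vectors. So $\mathcal{K}_X\neq\mathcal{C}_X$, and \cite[Corollary~3.3]{huybrechts2003kahler} then forces a rational curve on $X$. I expect the only delicate point to be checking that $z^\perp$ really meets the interior of $\mathcal{C}_X$, which I would verify using the Hodge index signature.
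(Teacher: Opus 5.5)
Your final ``uniform'' argument is correct and is exactly the paper's proof: get $z$ MBM from Theorem~\ref{Theorem:thmMBMclass} (your choice of the component containing $(X,\phi)$ is the right bookkeeping), observe that $z^\perp$ meets $\mathcal{C}_X$, conclude $\mathcal{K}_X\subsetneq\mathcal{C}_X$ from Theorem~\ref{theorem:mbmposk}, and then invoke \cite[Corollary~3.3]{huybrechts2003kahler}; the earlier detours through birational models and the divisor case analysis can simply be deleted. One small correction: the positive vector in $z^\perp$ comes from $H^{1,1}(X,\mathbb R)$ having signature $(1,h^{1,1}(X)-1)$, not $(1,\rho-1)$ (the Picard lattice may be negative definite), so $z^\perp\cap H^{1,1}(X,\mathbb R)$ has signature $(1,h^{1,1}(X)-2)$ and contains a class of $\mathcal{C}_X$ after a possible sign change.
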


\begin{proof}
By Theorem \ref{Theorem:thmMBMclass}, the class $z$ is MBM. Hence the
hyperplane $z^\perp$ is one of the MBM hyperplanes appearing in the
wall-and-chamber decomposition of the positive cone. Therefore the Kähler cone
of $X$ is strictly smaller than the positive cone, due to Theorem \ref{theorem:mbmposk}.

By the Huybrechts description of the Kähler cone \cite[corollary 3.3]{huybrechts2003kahler}, the Kähler cone is
cut out from the positive cone by the classes of rational curves. Hence, if the
Kähler cone is strictly smaller than the positive cone, $X$ contains a rational
curve.
\end{proof}

The following question was asked in \cite{amerik2021rational}. 
\begin{question}
    Let $\Lambda$ be a lattice of IHS type. Does there exist a $z\in \Lambda$ such that it is MBM whenever it is of the type $(1,1)$.
\end{question}
Theorem~\ref{Theorem:thmMBMclass} gives partial answer to the above question.
\begin{corollary}
    Let $\Lambda$ be a lattice of IHS manifold type. Fix the dimension $2n$. Suppose that the largest negative root of the Riemann--Roch polynomial is less than or equal to $-4$, and $n$ is odd. Then every primitive integral vector $z\in \Lambda$ with $q(z) = -2$ is MBM whenever it is of the type $(1,1)$.
\end{corollary}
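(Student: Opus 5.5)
We will derive the corollary directly from Theorem~\ref{Theorem:thmMBMclass} applied to a primitive $z$ with $q(z)=-2$. The work is to check the hypotheses on the roots and then to locate $1$ in the correct interval.

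\textbf{Step 1: roots of $P^z$.} The roots of $P_{\mathfrak{M}^o(\Lambda)}^z(x)=RR_X(-2x)$ are exactly $x=-t/2$, where $t$ runs over the roots of $RR_X$. By \cite{Jiang2020PositivityOR}, every real root of $RR_X$ is negative. Hence the real roots of $P^z$ are all positive, and they are obtained by halving the absolute values of the negative roots of $RR_X$.

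\textbf{Step 2: the hypotheses of the theorem.} The theorem requires a positive real root, all roots of multiplicity one, and $m$ odd when $n$ is odd.
\begin{itemize}
\item The hypothesis ``the largest negative root of $RR_X$ is $\le -4$'' presupposes that a negative real root exists. That root gives a positive real root of $P^z$.
\item Simplicity of all roots is not implied by the stated hypotheses. I will assume it, as Theorem~\ref{Theorem:thmMBMclass} does, or read it as part of the standing root-theoretic hypothesis of \cite{Jiang2020PositivityOR}. This is the one point where the statement as written is genuinely incomplete.
\item Granting simplicity, $m$ is odd whenever $n$ is odd. The $n-m$ nonreal roots of the real polynomial $P^z$ occur in conjugate pairs, so $n-m$ is even. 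This is the parity argument of Cases~2 and~4 in the proof of the theorem.
\end{itemize}

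\textbf{Step 3: locating $1$.} Let $t_{\max}$ be the largest negative root of $RR_X$. Then the smallest positive root of $P^z$ is
\[
r_1=-t_{\max}/2\ \ge\ 2>1.
\]
So $1\in(0,r_1)$, which is the first interval allowed in the odd case of Theorem~\ref{Theorem:thmMBMclass}. The theorem then says that $z$ is MBM whenever it is of type $(1,1)$.

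In fact the bound $t_{\max}\le -4$ is more than needed: $t_{\max}<-2$ already forces $r_1>1$. The only delicate point in the whole argument is the simple-root issue in Step~2; everything else reduces to the sign and parity bookkeeping already carried out in the proof of the theorem.
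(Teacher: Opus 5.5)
Your argument is the paper's argument. The roots of $P^z:=P^z_{\mathfrak{M}^o(\Lambda)}$ are $t/q(z)=-t/2$ for the roots $t$ of $RR_X$, so the smallest positive root is $-t_{\max}/2\ge 2$. Hence $1\in(0,r_1)$ and the odd case of Theorem~\ref{Theorem:thmMBMclass} applies. Your side remark that $t_{\max}<-2$ already suffices is also correct.

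\textbf{Where you go wrong.} You call the statement ``genuinely incomplete'' and add simplicity of the roots as an extra hypothesis, so as written you only prove a weaker corollary. You are right that a verbatim citation of Theorem~\ref{Theorem:thmMBMclass} needs simple roots, and the paper's one-line proof passes over this silently. But for the interval $(0,r_1)$, the \emph{proof} of that theorem uses neither simplicity nor the parity of $m$.

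Suppose $z$ were not MBM. That proof then produces a poor $Y_0$ carrying a line bundle $L_0$ with $c_1(L_0)=z$. Since $n$ is odd, Proposition~\ref{prop:negative-square-cohom} gives $P^z(1)=\chi(Y_0,L_0)=(-1)^n h^n(Y_0,L_0)\le 0$. On the other hand, $P^z(0)=n+1>0$ and $P^z$ has no root in $(0,1]$. By the intermediate value theorem $P^z(1)>0$, whatever the multiplicities, which is a contradiction.

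So rather than assuming simplicity, rerun this sign argument. Your proof then yields the corollary exactly as stated; in fact it only uses $RR_X(-2)>0$.
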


\begin{proof}
    Observe that $r$ is a root of $RR_X(t)$ if and only if $\frac{r}{q(z)}$ is a root of $P_{\mathfrak{M}^o(\Lambda)}^z (x)$. Therefore, the smallest positive root of $P_{\mathfrak{M}^o(\Lambda)}^z (x)$ is greater than or equal to $2$, because $q(z) =-2$. $n$ is odd and $1\in (0,2)$, so $z$ is MBM whenever it is of the type $(1,1)$ due to Theorem \ref{Theorem:thmMBMclass}.
\end{proof}
Despite the immense effort devoted to this subject, only four deformation types of irreducible holomorphic symplectic manifolds are currently known. Beauville introduced two infinite series: IHS manifolds of the $K3^{[n]}$-type and of the generalized Kummer type \cite{beauville1983varietes}. In addition, there are two sporadic examples, in dimensions $6$ and $10$, known as the $\mathrm{OG}_6$ and $\mathrm{OG}_{10}$ deformation types, constructed by O'Grady \cite{o1997desingularized,o2000new}. The above corollary is not new for the known deformation types of irreducible holomorphic symplectic manifolds, but the proof is independent of what deformation type it is.
\begin{example}
    If $\Lambda$ is a lattice of type $K3^{[n]}$. Then by \cite[Example 1.2]{Jiang2020PositivityOR}, we have
    \[
     P_{\mathfrak{M}^o(\Lambda)}^z (x) = \frac{1}{n!}\prod_{k=2}^{n+1} \Bigl( \frac{q(z)x}{2}+k\Bigr).
    \]
    When $q(z)$ is equal to $-2$, the roots are 
    \[
    2 , \  3,\  \cdots \ ,\ n, \ n+1.
    \]
     If $n$ is odd, the above corollary implies $z$ is MBM whenever it is of type $(1,1)$. This type of result has already been proved in \cite[Theorem 1.2, Proposition 1.5]{10.1215/21562261-2081243}.
\end{example}
\begin{example}\label{EX:1}
    If $\Lambda$ is a lattice of $OG_{10}$ type. Then by \cite[Theorem 2]{RiosOrtiz2024RR}, we have 
    \[
    P_{\mathfrak{M}^o(\Lambda)}^z (x) = \frac{1}{n!}\prod_{k=2}^{6} \Bigl( \frac{q(z)x}{2}+k\Bigr).
    \]
    When $q(z)$ is equal to $-2$, the roots are 
    \[
    2, \ 3,\ 4,\ 5,\ 6.
    \]
     The above corollary implies $z$ is MBM whenever it is of type $(1,1)$. This type of result has already been proved in \cite[Proposition 1]{Mongardi2023ErratumMonodromy}.
    
\end{example}
We should not expect to prove the converse of the above theorem as stated because of the example below.
\begin{example}\label{EX:2}
    For $K3^{[2]}$ type, we have 
    \[
    P_{\mathfrak{M}^o(\Lambda)}^z (x) = \frac{1}{2} \Bigl( \frac{q(z)x}{2}+2\Bigr)\Bigl( \frac{q(z)x}{2}+3\Bigr).
    \] 
    The roots are 
    \[
    \frac{-4}{q(z)}, \ \frac{-6}{q(z)}.
    \]
    By \cite[Theorem~4.1]{amerik2019mbm}, an integral $(1,1)$ class $z$ is MBM then $q(z) = -2 \text{ or } -10$. In either case, our analysis gives nothing.
\end{example}

\begin{example}
We give an example of a poor IHS manifold of $K3^{[2]}$-type with positive Picard rank. This example also illustrates the sharpness of the open-interval condition in Theorem~\ref{Theorem:thmMBMclass}. Let
\[
\Lambda=U^{\oplus 3}\oplus E_8(-1)^{\oplus 2}\oplus\langle-2\rangle
\]
be the Beauville--Bogomolov lattice of an IHS manifold of $K3^{[2]}$-type, and let $\mathfrak M^0_\Lambda$ be a connected component of the moduli space of marked IHS manifolds with lattice $\Lambda$. Choose a standard basis $e,f$ of one copy of $U$, so that $q(e)=q(f)=0$ and $q(e,f)=1$, and set $z:=e-2f$. Then $z$ is primitive and $q(z)=-4$. Consider the locus
\[
Q_\Lambda\cap\mathbb P(z^\perp\otimes\mathbb C).
\]
Choose a very general point of this locus. By surjectivity of the period map on $\mathfrak M^0_\Lambda$, there exists a marked IHS manifold $(X,\varphi)\in \mathfrak M^0_\Lambda$ having this period. Since the point is very general in $Q_\Lambda\cap\mathbb P(z^\perp\otimes\mathbb C)$, every integral $(1,1)$-class on $X$ is proportional to $z$. Since $z$ is primitive, we obtain
\[
\Pic(X)=\NS(X)=\mathbb Zz.
\]
In particular, $\rho(X)=1$, and since $q(z)=-4<0$, the Picard lattice of $X$ is negative definite. Thus $X$ is elliptic. We claim that $X$ is poor. By \cite[Theorem~4.1]{amerik2019mbm}, a primitive integral MBM class on an IHS manifold of $K3^{[2]}$-type has Beauville--Bogomolov square $-2$ or $-10$. Since $q(z)=-4$, the class $z$ is not MBM. Since every rational $(1,1)$-class on $X$ is proportional to $z$, $X$ has no MBM classes. Indeed, if $X$ contained a rational curve $C$, then under the Beauville--Bogomolov identification its class would correspond to a nonzero rational $(1,1)$-class, hence to a rational multiple of $z$. Thus, after replacing $z$ by $-z$ if necessary, a positive rational multiple of $z$ would be represented by a rational curve. By \cite[Theorem~5.10]{amerik2015rational}, $z$ would then be MBM, a contradiction. Therefore $X$ contains no rational curves. Since $X$ is elliptic, Theorem~\ref{theorem:nocurvespoor} implies that $X$ contains no curves at all and is poor. There is an additional feature of this example. Let $L$ be the line bundle with $c_1(L)=z$. For $K3^{[2]}$-type, we have,
\[
P^z_{\mathfrak M^0_\Lambda}(x)=\frac12\left(\frac{q(z)x}{2}+2\right)\left(\frac{q(z)x}{2}+3\right).
\]
Since $q(z)=-4$, we obtain
\[
P^z_{\mathfrak M^0_\Lambda}(x)=\frac12(2-2x)(3-2x).
\]
Its positive roots are $1$ and $\frac32$. Thus $1$ is itself a root, rather than lying in one of the open intervals appearing in Theorem~\ref{Theorem:thmMBMclass}. Hence Theorem~\ref{Theorem:thmMBMclass} gives no conclusion that $z$ is MBM, consistently with the fact that $z$ is not MBM. Moreover,
\[
\chi(X,L)=P^z_{\mathfrak M^0_\Lambda}(1)=0.
\]
Since $X$ is poor and $q(c_1(L))=-4<0$, Proposition~\ref{prop:negative-square-cohom} gives $H^i(X,L)=0$ for every $i\neq2$. But Proposition~\ref{prop:negative-square-cohom} also gives $h^2(X,L)=\chi(X,L)=0$, and therefore
\[
H^i(X,L)=0\qquad\text{for every }i.
\]
Thus $X$ is a poor IHS manifold of $K3^{[2]}$-type with positive Picard rank which carries a nontrivial acyclic line bundle of negative Beauville--Bogomolov square.
\end{example}

For $K3$ surfaces, the  above Theorem together with results in \cite{vikash2026classificationpoormanifoldslow}, we can show that a negative curve is extremal if and only if $C^2 =-2$, which is a classical result. We will explore this connection in future work. 

\section{Classification of poor elliptic IHS manifolds}
\subsection{Poor elliptic IHS manifolds}
Let $X$ be a IHS manifold. Since $H^1(X,\mathbb{Z}) = 0$, the exponential sequence shows that
\[
    \operatorname{Pic}(X) \cong \operatorname{NS}(X)
    = H^{1,1}(X) \cap H^2(X,\mathbb{Z}).
\]
We are primarily interested in the lattice $(\operatorname{NS}(X), q_X)$, whose rank is the Picard number $\rho$.
\begin{proposition} \label{proposition:proppic0}
    Let $X$ be an IHS manifold with $\operatorname{Pic}(X) = 0$. Then $X$ is poor. In fact, $X$ has no curves at all.
\end{proposition}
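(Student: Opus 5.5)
The proof will show directly that $X$ contains no curves at all. Poorness then follows at once: a rational curve is a curve, and a codimension-one subvariety would contain curves, since $\dim X = 2n \ge 2$. The argument is Hodge-theoretic and uses the Beauville--Bogomolov--Fujiki identification of curve classes with $(1,1)$-classes from Remark~\ref{remark:bbf-dual-curve-classes}.

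First I rule out divisors, which is immediate. If $D\subset X$ were a prime divisor, then $\mathcal{O}_X(D)$ would be a line bundle. Its first Chern class $[D]$ is nonzero, because integrating $[D]$ against $\omega^{2n-1}$ for a Kähler form $\omega$ gives $\mathrm{vol}_\omega(D)>0$. This contradicts $\operatorname{Pic}(X)=0$.

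Next I rule out curves. Let $C\subset X$ be an irreducible curve, so $[C]\in H_2(X,\mathbb{Z})$. By Remark~\ref{remark:bbf-dual-curve-classes}, there is a class $[C]^\vee\in H^2(X,\mathbb{Q})$ with $\alpha\cdot C=q_X(\alpha,[C]^\vee)$ for all $\alpha$. I claim $[C]^\vee$ is of type $(1,1)$. Indeed, integrating the $(2,0)$-form $\sigma$ over the complex curve $C$ gives $\int_C\sigma=0$, and likewise $\int_C\bar\sigma=0$. Hence $q_X(\sigma,[C]^\vee)=q_X(\bar\sigma,[C]^\vee)=0$. Since $H^{1,1}$ is exactly the $q_X$-orthogonal complement of $H^{2,0}\oplus H^{0,2}$, this gives $[C]^\vee\in H^{1,1}(X)\cap H^2(X,\mathbb{Q})$.

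Now $\operatorname{Pic}(X)=\operatorname{NS}(X)=H^{1,1}(X)\cap H^2(X,\mathbb{Z})=0$, and rational $(1,1)$-classes are rational multiples of integral ones. So $H^{1,1}(X)\cap H^2(X,\mathbb{Q})=0$, and therefore $[C]^\vee=0$. But then $\omega\cdot C=q_X(\omega,[C]^\vee)=0$ for a Kähler class $\omega$, contradicting $\int_C\omega>0$. So $X$ has no curves, and in particular it is poor.

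The only step needing care is the claim that $[C]^\vee$ is of type $(1,1)$. The point is that the BBF-dual is compatible with Hodge decompositions, because $q_X$ pairs $H^{2,0}$ nondegenerately only with $H^{0,2}$. I would verify this from the formula $q(\alpha,\beta)=c\int\alpha\beta(\sigma\bar\sigma)^{n-1}$ together with the vanishing of $\sigma$ and $\bar\sigma$ on $C$. The remaining steps are routine.
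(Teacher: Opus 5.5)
Your proof is correct and follows the same route as the paper. You exclude divisors because $\mathcal{O}_X(D)$ would be a nontrivial line bundle, and you exclude curves because the BBF-dual $[C]^\vee$ is a nonzero rational $(1,1)$-class, some multiple of which would give a nontrivial line bundle; along the way you supply the two details the paper leaves implicit, namely that $[C]^\vee$ is of type $(1,1)$ since $\int_C\sigma=0$, and that it is nonzero since $\int_C\omega>0$. One small correction: your opening remark that a codimension-one subvariety must contain curves is not justified by dimension alone, since non-algebraic compact subvarieties can contain no curves, so it is your direct line-bundle argument that actually rules out divisors.
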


\begin{proof}
        First, $X$ has no codimension-one subvarieties. Now suppose, for contradiction, that $X$ contains a curve $C \subset X$. The Beauville--Bogomolov--Fujiki form, together with Poincar\'e duality, induces a $\mathbb{Q}$-linear isomorphism between
        \[
        H^{1,1}(X,\mathbb{Q})
        \quad\text{and}\quad
        H^{2n-1,2n-1}(X,\mathbb{Q}).
        \]
        In particular, for any curve $C \subset X$, the cohomology class $[C]^\vee$ corresponds to a unique rational $(1,1)$-class and some nonzero multiple is an integral class, hence gives a line bundle, a contradiction.
\end{proof}
Let us now concentrate on IHS manifolds with nontrivial Picard group and fix some notation.
\begin{definition}
    Let $X$ be an IHS manifold with Picard number $\rho \neq 0$, and consider the lattice $(\operatorname{NS}(X), q)$. Then $X$ is called:
    \begin{enumerate}
        \item \emph{hyperbolic} if $(\operatorname{NS}(X), q)$ has signature $(1,0,\rho-1)$;
        \item \emph{parabolic} if $(\operatorname{NS}(X), q)$ has signature $(0,1,\rho-1)$;
        \item \emph{elliptic} if $(\operatorname{NS}(X), q)$ has signature $(0,0,\rho)$.
    \end{enumerate}
\end{definition}
A result of Huybrechts states that an IHS manifold is projective if and only if it is hyperbolic (see \cite{HuybrechtsErratum,huybrechts1997compact}). If $X$ is poor, then by Corollary \ref{corollary:concor} any line bundle $L$ with $q(c_1(L)) = 0$ is nef or its dual is nef.
\begin{conjecture}[SYZ Conjecture {\cite{soldatenkov2024abundance}}]
    Let $L$ be a nef line bundle on a hyperk\"ahler manifold $X$ with $q_X(c_1(L)) = 0$. Then there exists a Lagrangian fibration $\pi \colon X \to B$ and an ample line bundle $\mathcal{O}_B(1)$ on $B$ such that
    \[
        L^{\otimes k} \simeq \pi^*\bigl(\mathcal{O}_B(1)\bigr)
    \]
    for some positive integer $k$.
\end{conjecture}
\begin{remark}\label{remark:syz}
    If we assume the SYZ conjecture then from the above discussion, a manifold $X$ cannot be both poor and parabolic. Indeed, if $X$ were parabolic and poor, then any isotropic class with $q(c_1(L)) = 0$ would be nef, and the SYZ conjecture would produce a Lagrangian fibration $\pi \colon X \to B$. The results of \cite{soldatenkov2024abundance} imply that such a fibration has singular fibers, while \cite{hwang2009characteristic} shows that a general singular fiber is covered by rational curves. This contradicts the assumption that $X$ is poor.
\end{remark}
Therefore, it is natural to restrict ourselves to poor elliptic IHS manifolds. In this subsection, we prove some properties of poor elliptic IHS manifolds. 
\begin{definition}
    An IHS manifold $X$ is called elliptic if its Picard lattice is negative definite with respect to the BBF form.
\end{definition}
\begin{theorem}\label{theorem:nocurvespoor}
    Let $X$ be an elliptic IHS manifold. Then the following are equivalent:
    \begin{enumerate}
        \item $X$ is poor.
        \item $X$ contains no curves.
        \item $X$ contains no rational curves.
    \end{enumerate}
\end{theorem}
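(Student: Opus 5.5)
We will prove the cycle of implications $(2)\Rightarrow(1)\Rightarrow(3)\Rightarrow(2)$. The first two are essentially formal; the content is in $(3)\Rightarrow(2)$.

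For $(2)\Rightarrow(1)$: if $X$ contains no curves, it contains no rational curves. It also contains no codimension-one subvarieties, since $\dim X=2n\ge 2$ and any divisor $D$ of dimension $2n-1\ge 1$ contains curves (cut $D$ down by general hypersurface sections in local charts, or use that positive-dimensional compact analytic spaces contain curves). Hence $X$ is poor. The implication $(1)\Rightarrow(3)$ is immediate from the definition of poor.

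For $(3)\Rightarrow(2)$, assume $X$ is elliptic with no rational curves, and suppose $C\subset X$ is a curve. The plan has three steps.

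\emph{Step 1: move to a rational $(1,1)$-class.} By Remark~\ref{remark:bbf-dual-curve-classes}, the class $[C]\in H^{2n-1,2n-1}(X,\mathbb Q)$ has a BBF-dual $[C]^\vee$, which is a nonzero rational $(1,1)$-class; it is nonzero because $\int_C\omega>0$ for a K\"ahler class $\omega$. Some multiple of it is integral, hence lies in $\NS(X)$. Ellipticity then gives $q([C]^\vee)<0$.

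\emph{Step 2: show that no divisors exist.} If $D$ were a prime divisor, then $q(c_1(\mathcal O(D)))<0$ by ellipticity, and \cite[Proposition 4.7]{boucksom2004divisorial} would produce a rational curve (exactly as in the Claim in the proof of Theorem~\ref{Theorem:thmMBMclass}). This is excluded, so $X$ has no divisors. Together with the absence of rational curves, \cite[Corollary 3.3]{huybrechts2003kahler} gives $\mathcal K_X=\mathcal C_X$; equivalently, $X$ is poor and Corollary~\ref{corollary:concor} applies.

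\emph{Step 3: contradiction from positivity.} Since $\mathcal K_X=\mathcal C_X$, every class $\alpha\in\mathcal C_X$ is K\"ahler, so
\[
\alpha\cdot C=q(\alpha,[C]^\vee)>0\quad\text{for all }\alpha\in\mathcal C_X .
\]
Hence $[C]^\vee$ lies in the closed dual cone of $\mathcal C_X$, which by self-duality (as in the proof of Proposition~\ref{prop:negative-square-cohom}) is $\overline{\mathcal C_X}$. But every class in $\overline{\mathcal C_X}$ has $q\ge 0$, contradicting $q([C]^\vee)<0$ from Step 1. So $X$ contains no curves.

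The expected main obstacle is Step 3, in two respects. First, one must check that the pairing of curve classes with $H^{1,1}$ is genuinely given by $q(\cdot,[C]^\vee)$ in a way compatible with Hodge type. The dual class must lie in $H^{1,1}(X,\mathbb R)$ and not merely in $H^2$, which should follow because $q$ respects the Hodge decomposition. Second, the self-duality of $\overline{\mathcal C_X}$ must be used inside $H^{1,1}(X,\mathbb R)$, where $q$ has signature $(1,h^{1,1}-1)$. This hyperbolic signature is exactly what makes $\mathcal C_X$ self-dual there, so I would state it explicitly.
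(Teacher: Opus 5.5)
\textbf{The gap is in $(2)\Rightarrow(1)$.} Your $(3)\Rightarrow(2)$ is sound, but the justification you give for $(2)\Rightarrow(1)$ is wrong. It is not true that a positive-dimensional compact analytic space must contain curves: a general complex torus of dimension $\ge 2$, or a general K3 surface, contains no curves at all. Cutting $D$ by hypersurfaces in local charts only produces non-compact local curves, and nothing forces them to close up to compact curves in $D$. For $n\ge 2$ a divisor has dimension $2n-1\ge 3$, so no formal argument of this kind shows that $D$ contains a curve.

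The conclusion is still true, and you already have the tool. By ellipticity a prime divisor $D$ has $q([D])<0$, so by \cite[Proposition~4.7]{boucksom2004divisorial} it is uniruled, and in particular contains curves. Equivalently, route it as $(2)\Rightarrow(3)$, which is trivial, followed by $(3)\Rightarrow(1)$, which is exactly your Step~2.

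\textbf{Comparison with the paper.} With that repair your proof is correct, and it is essentially the paper's cycle $(1)\Rightarrow(2)\Rightarrow(3)\Rightarrow(1)$ rearranged. The paper's $(3)\Rightarrow(1)$ is your Step~2. Its $(1)\Rightarrow(2)$ uses $q([C]^\vee)<0$ and Corollary~\ref{corollary:concor}, then cites \cite[Proposition~5.1]{huybrechts2003kahler} to produce a rational curve. Your Step~3 reaches the contradiction directly instead. Since K\"ahler classes fill $\mathcal{C}_X$, the class $[C]^\vee$ lies in the closed dual cone of $\mathcal{C}_X$. That dual cone is $\overline{\mathcal{C}_X}$ because $q$ is hyperbolic on $H^{1,1}(X,\mathbb{R})$, which contradicts $q([C]^\vee)<0$. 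This is the same mechanism as in the proof of Theorem~\ref{theorem:thstb}; it is more self-contained and shows plainly where ellipticity enters.

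The Hodge-type concern you raise is harmless. Since $\sigma|_C=0$, one has $q(\sigma,[C]^\vee)=q(\bar\sigma,[C]^\vee)=0$. And $H^{1,1}(X)$ is the $q$-orthogonal complement of $H^{2,0}\oplus H^{0,2}$, so $[C]^\vee$ is of type $(1,1)$.
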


 \begin{proof}
        We first prove $(1) \Rightarrow (2)$. Assume that $X$ is poor, and suppose for a contradiction that $X$ contains a curve. Then for any curve $C \subset X$, there exists a class $\alpha \in H^{1,1}(X,\mathbb{R})$ associated to $C$ with 
        \[
            q(\alpha) < 0
        \]
        (see Remark \ref{remark:bbf-dual-curve-classes}). By Corollary~\ref{corollary:concor}, this implies that 
        \[
            \alpha \notin \overline{\mathcal{K}_X}.
        \]
        By \cite[Proposition~5.1]{huybrechts2003kahler}, the existence of such a class then guarantees that $X$ contains a rational curve, contradicting the assumption that $X$ is poor. The implication $(2) \Rightarrow (3)$ is immediate. Finally, we show $(3) \Rightarrow (1)$. Assume that $X$ contains no rational curves, and suppose for a contradiction that $X$ is not poor. By definition, this means that $X$ contains a codimension-one subvariety. Let $D \subset X$ be an irreducible component of such a divisor. In the elliptic case one has
        \[
            q\big(c_1(\mathcal{O}_X(D))\big) < 0.
        \]
        By \cite[Proposition~5.4]{huybrechts2003kahler}, the divisor $D$ is then uniruled, so it is covered by rational curves. This contradicts the assumption that $X$ contains no rational curves. Therefore $X$ has no divisors, and together with (3) this shows that $X$ is poor. Hence $(3) \Rightarrow (1)$.
    \end{proof}
As we saw in Proposition \ref{proposition:proppic0}, the same phenomenon occurs for elliptic IHS manifolds as well. However, the absence of codimension-one subvarieties is not the same as the absence of curves. In particular, an IHS manifold may contain rational curves while having no codimension-one subvarieties as we shall see in the next section. We now prove some results about elliptic IHS manifolds.
\begin{definition}
Let $\mathcal{F}$ be a coherent sheaf over an $n$-dimensional compact K\"ahler manifold $X$. We define the degree $\deg(\mathcal{F})$ as
\[
    \deg(\mathcal{F}) = \frac{\displaystyle\int_X c_1(\mathcal{F}) \wedge \omega^{n-1}}{\operatorname{vol}(X)},
\]
and the slope $\mu_{\omega}(\mathcal{F})$ as
\[
    \mu_{\omega}(\mathcal{F}) = \frac{1}{\operatorname{rank}(\mathcal{F})} \cdot \deg(\mathcal{F}).
\]
\end{definition}

\begin{definition}[Slope stability]
A torsion-free coherent sheaf $\mathcal{F}$ on a compact K\"ahler manifold $(X,\omega)$ is called slope stable (respectively, slope semistable) if for every nonzero coherent subsheaf $\mathcal{E} \subsetneq \mathcal{F}$ with $\operatorname{rank}(\mathcal{E}) < \operatorname{rank}(\mathcal{F})$ one has
\[
    \mu_{\omega}(\mathcal{E}) < \mu_{\omega}(\mathcal{F}) \quad (\text{respectively, } \mu_{\omega}(\mathcal{E}) \leq \mu_{\omega}(\mathcal{F})).
\]
\end{definition}

By the Kobayashi--Hitchin correspondence, proven by Donaldson and Uhlenbeck--Yau (see \cite{lubke1995kobayashi, uhlenbeck1986existence}), the tangent bundle $T_X$ is slope stable with respect to any K\"ahler class $\omega \in \mathcal{K}_X$. In particular, $\Omega_X^1 = T_X^\vee$ is also slope stable of slope $0$ with respect to any such $\omega$.

\begin{theorem}\label{theorem:thstb}
    Let $X$ be an elliptic IHS manifold of dimension $2n$. Then either
    \begin{enumerate}
        \item $X$ has an MBM class, or 
        \item $T_X$ (equivalently, $\Omega_X^1$) has no non-trivial torsion-free subsheaves of smaller rank.
    \end{enumerate}
\end{theorem}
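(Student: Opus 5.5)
We argue by contraposition: assume $X$ has no MBM class and let $\mathcal{E}\subset\Omega^1_X$ be a nonzero torsion-free subsheaf with $0<r:=\operatorname{rank}\mathcal{E}<2n$. We will show that $\mathcal{E}$ cannot exist.

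\textbf{Step 1: $X$ is poor and its Kähler cone is the whole positive cone.}
Since $X$ has no MBM classes, Theorem~\ref{theorem:mbmposk} gives $\mathcal{K}_X=\mathcal{C}_X$. The Kähler cone is cut out of the positive cone by the classes of rational curves \cite[Corollary~3.3]{huybrechts2003kahler}, and the argument of \cite[Theorem~5.10]{amerik2015rational} used in the Claim shows that any rational curve would produce an MBM class. Hence $X$ contains no rational curves. As $X$ is elliptic, Theorem~\ref{theorem:nocurvespoor} then shows that $X$ is poor and contains no curves at all. By Corollary~\ref{corollary:concor}, $\mathcal{K}_X=\mathcal{C}_X$ and $\mathcal{N}_X=\overline{\mathcal{C}_X}$.

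\textbf{Step 2: $\det\mathcal{E}$ is a line bundle of slope zero for all Kähler classes.}
Set $L:=\det\mathcal{E}=(\Lambda^r\mathcal{E})^{\vee\vee}$, a line bundle with $c_1(L)\in\operatorname{NS}(X)$. Stability of $\Omega^1_X$ for every Kähler class gives
\[
\int_X c_1(L)\wedge\omega^{2n-1}<0\qquad\text{for all }\omega\in\mathcal{K}_X=\mathcal{C}_X .
\]
By the Fujiki relation, $\int_X c_1(L)\omega^{2n-1}$ is a positive constant times $q(c_1(L),\omega)\,q(\omega)^{n-1}$. So $q(c_1(L),\omega)<0$ for every $\omega\in\mathcal{C}_X$, which says $-c_1(L)\in\check{\mathcal{C}}_X$. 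The positive cone is self-dual (as used in Proposition~\ref{prop:negative-square-cohom}), so $-c_1(L)\in\overline{\mathcal{C}_X}$.

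Now use ellipticity. If $c_1(L)\neq0$, then $q(c_1(L))<0$, which contradicts $-c_1(L)\in\overline{\mathcal{C}_X}$. If $c_1(L)=0$, the slope is $0$, which contradicts the strict inequality. Either way $\mathcal{E}$ cannot exist, so (2) holds.

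\textbf{Main obstacle.} The point needing care is passing from stability with respect to one fixed class to the inequality for \emph{every} Kähler class. Here this is supplied by the Kobayashi--Hitchin remark: $T_X$ is stable for every $\omega\in\mathcal{K}_X$. The strength of the argument then comes entirely from $\mathcal{K}_X$ being the full positive cone. Without that, $c_1(L)$ would only be constrained on the smaller cone $\mathcal{K}_X$, and a negative class could pair negatively with all of $\mathcal{K}_X$.

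A secondary check is that the sign convention of Fujiki's relation makes the constant positive. This holds because $\int_X\omega^{2n}>0$ and $q(\omega)>0$.

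Finally, note that the argument uses only the inequality $q(c_1(L))<0$ for nonzero integral $(1,1)$-classes. So it goes through verbatim for any elliptic $X$ once $\mathcal{K}_X=\mathcal{C}_X$ is known.
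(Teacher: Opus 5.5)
Your proof is correct and is essentially the paper's argument, which runs as follows: assuming no MBM class, Theorem~\ref{theorem:mbmposk} gives $\mathcal{K}_X=\mathcal{C}_X$. Stability of $\Omega^1_X$ for every K\"ahler class, together with the Fujiki relation, then puts the first Chern class of the determinant of the destabilizing subsheaf in $\pm\overline{\mathcal{C}_X}$, which contradicts negative definiteness of $\operatorname{NS}(X)$; your use of $\det\mathcal{E}$ instead of the paper's $\det(\mathcal{F}^*)$ only flips a sign. Your Step~1 is superfluous, since the only fact used later is $\mathcal{K}_X=\mathcal{C}_X$, which Theorem~\ref{theorem:mbmposk} gives directly. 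Its appeal to \cite[Theorem~5.10]{amerik2015rational} ``as in the Claim'' also relied on Picard rank one there, although in the elliptic case the conclusion still holds: any curve $C$ would force $q([C]^\vee)\ge 0$, because $[C]^\vee$ pairs positively with all of $\mathcal{K}_X=\mathcal{C}_X$.
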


 \begin{proof}
Assume that $X$ has no MBM classes and suppose, for contradiction, that there exists a nonzero torsion-free subsheaf
\[
0\neq \mathcal F \subsetneq \Omega_X^1
\]
of rank $r<2n$. Fix a K\"ahler class $\omega\in \mathcal K_X$. Since $\Omega_X^1$ is slope-stable of slope $0$ with respect to every K\"ahler class, we have
\[
\mu_\omega(\mathcal F)<\mu_\omega(\Omega_X^1)=0.
\]
Hence
\[
\mu_\omega(\mathcal F^*)=-\mu_\omega(\mathcal F)>0.
\]
Let
\[
L:=\det(\mathcal F^*),
\qquad
c:=c_1(L)\in H^{1,1}(X,\mathbb Z).
\]
Since $\operatorname{rk}(\mathcal F^*)=r$, we get
\[
\mu_\omega(L)=r\,\mu_\omega(\mathcal F^*)>0
\]
for every K\"ahler class $\omega\in \mathcal K_X$. Equivalently,
\[
\int_X c\wedge \omega^{2n-1}>0
\qquad
\text{for all }\omega\in \mathcal K_X.
\]
In particular, $c\neq 0$. Because $X$ has no MBM classes, Theorem~\ref{theorem:mbmposk} gives
\[
\mathcal K_X=\mathcal C_X,
\]
where $\mathcal C_X$ is the positive cone. By Fujiki's relation, there exists a positive constant $C_X$ such that for all $\alpha,\beta\in H^2(X,\mathbb R)$,
\[
\int_X \alpha\wedge \beta^{2n-1}
=
C_X\,n\, q(\beta)^{\,n-1} q(\alpha,\beta).
\]
Applying this with $\alpha=c$ and $\beta=\omega$, and using $\omega\in \mathcal C_X$ so that $q(\omega)>0$, we obtain
\(
q(c,\omega)>0
\text{ for all }\omega\in \mathcal C_X.
\)
Now $(H^{1,1}(X,\mathbb R),q)$ has signature $(1,b_2(X)-3)$, so the dual cone of the positive cone $\mathcal C_X$ is the closure $\overline{\mathcal C_X}$. Therefore the condition
\[
q(c,\omega)>0 \quad\text{for all }\omega\in \mathcal C_X
\]
implies that
\(
c\in \overline{\mathcal C_X}.
\)
In particular,
\(
q(c)\ge 0.
\)
On the other hand, $c\in H^{1,1}(X,\mathbb Z)\cap H^2(X,\mathbb Z)=\operatorname{NS}(X)$ is nonzero, and $X$ is elliptic. By definition, this means that the Beauville--Bogomolov form is negative definite on $\operatorname{NS}(X)$. Hence every nonzero class in $\operatorname{NS}(X)$ satisfies
\(
q(c)<0,
\)
contradicting $q(c)\ge 0$. This contradiction shows that no such $\mathcal F$ can exist. Therefore, if $X$ has no MBM classes, then $\Omega_X^1$ (equivalently, $T_X$) has no nonzero torsion-free subsheaf of rank $<2n$.
\end{proof}
\begin{corollary}\label{corollary:folfib}
Let $X$ be a poor manifold and an elliptic IHS manifold. Then $X$ admits no holomorphic foliations and no fibrations.
\end{corollary}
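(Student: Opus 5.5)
Since $X$ is poor, it contains no rational curves, so it has no MBM classes. Indeed, an MBM class on $X$ gives a wall $z^\perp$ meeting the positive cone, so $\mathcal K_X \subsetneq \mathcal C_X$ by Theorem~\ref{theorem:mbmposk}. But Corollary~\ref{corollary:concor} gives $\mathcal K_X=\mathcal C_X$ for poor $X$, a contradiction. (Alternatively, one can argue as in the proof of Corollary~\ref{cor:rationalcuexc}: by \cite[Corollary~3.3]{huybrechts2003kahler}, MBM walls can only come from rational curves.) Since $X$ is also elliptic, Theorem~\ref{theorem:thstb} applies and excludes alternative (1). Hence $T_X$ and $\Omega^1_X$ have no nontrivial torsion-free subsheaves of rank strictly between $0$ and $2n$.

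\textbf{Foliations.} A holomorphic foliation on $X$ is a saturated, involutive coherent subsheaf $\mathcal F\subset T_X$ with $0<\operatorname{rk}\mathcal F<2n$. Such a sheaf is automatically torsion-free, being a subsheaf of a locally free sheaf. This is exactly what the conclusion of Theorem~\ref{theorem:thstb} forbids, so $X$ has no holomorphic foliations. Note that the proof of Theorem~\ref{theorem:thstb} was carried out for $\Omega^1_X$. To pass to $T_X$ I would either repeat that proof verbatim, using stability of $T_X$ and $\det(\mathcal F)^{*}$, or dualize: a rank $r$ saturated $\mathcal F\subset T_X$ yields the annihilator $(T_X/\mathcal F)^{*}\subset\Omega^1_X$, which has rank $2n-r$ and is also a proper nonzero subsheaf.

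\textbf{Fibrations.} Let $f\colon X\to B$ be a fibration, i.e.\ a surjective holomorphic map with connected fibers onto a normal complex space with $0<\dim B<2n$. Over the smooth locus of $f$ the relative tangent sheaf $T_{X/B}=\ker(df)$ is a saturated, involutive subsheaf of $T_X$ of rank $2n-\dim B$, strictly between $0$ and $2n$. By the previous paragraph this is impossible. The kernel is defined on all of $X$ as a saturated subsheaf, since it is the kernel of $T_X\to f^*T_B$ when $B$ is smooth, or can be obtained by taking the saturation of the kernel on the open set where $f$ is smooth. There is also a more elementary argument: a fiber of $f$ over a general point is a positive-dimensional subvariety, and $f^{-1}$ of a hypersurface, or of a point if $\dim B=1$, is a codimension-one subvariety, contradicting poorness. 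When $\dim B=1$ this is immediate. In general one takes the preimage of a general hyperplane section through a local embedding, or simply uses the tangent-sheaf argument.

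\textbf{Main difficulty.} The only delicate step is making sure the foliation and fibration sheaves really are torsion-free of intermediate rank on all of $X$, and not just on an open set; saturation takes care of this. Deducing the absence of MBM classes from poorness is a direct combination of results stated earlier in the paper.
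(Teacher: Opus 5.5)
Your proposal is correct and follows the paper's proof. Poorness gives $\mathcal K_X=\mathcal C_X$, hence no MBM classes by Theorem~\ref{theorem:mbmposk}, and then Theorem~\ref{theorem:thstb} rules out the intermediate-rank torsion-free subsheaves of $T_X$ that any foliation or fibration would produce. Your extra details are fine, including the passage between $T_X$ and $\Omega^1_X$, which is even more immediate from the isomorphism $T_X\cong\Omega^1_X$ induced by $\sigma$.

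The only weak spot is your ``more elementary'' aside for $\dim B\ge 2$. A hypersurface taken through a local embedding of $B$ does not give a global divisor on $X$, so that route fails in general. You do not need it, though: the relative tangent sheaf already does the job, defined globally as the kernel of $T_X\to \sHom(f^*\Omega^1_B,\mathcal O_X)$, which is coherent and saturated of rank $2n-\dim B$.
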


\begin{proof}
    Since $X$ is poor, $X$ does not admit MBM classes due to Proposition \ref{proposition:propcone} and Theorem \ref{theorem:mbmposk}. Then by Theorem \ref{theorem:thstb}, $TX$ has no torsion-free subsheaves; therefore, there are no holomorphic foliations and fibrations.
\end{proof}

\subsection{Families of poor elliptic IHS manifolds}
Let $X$ be an irreducible holomorphic symplectic manifold. $X$ admits a universal deformation space, which we denote by $\operatorname{Def}(X)$. Thus, there exists a proper flat family
\[
\pi : \mathcal{X} \longrightarrow \operatorname{Def}(X), \qquad \pi^{-1}(0) \cong X.
\]
\begin{definition}
Let $Y$ be a complex space and $n \ge 0$ an integer. A compact $n$–cycle on $Y$ is a finite formal sum
\[
  Z = \sum_{i=1}^r m_i [V_i],
\]
where each $V_i \subset Y$ is an irreducible compact analytic subset of pure dimension $n$ and each $m_i \in \mathbb{N}$ is a positive integer. Let,
\[
\operatorname{Supp}(Z) := \bigcup_i V_i.
\]
We denote by $\mathcal{C}_n(Y)$ the Barlet space of compact $n$-cycles on $Y$.
\end{definition}
For each integer $n \ge 0$, let $\mathcal{C}_n(\mathcal{X})$ denote the Barlet space of compact $n$–cycles in $\mathcal{X}$. The relative cycle space of $\pi$ is defined as
\[
\mathcal{C}_n(\mathcal{X}/\operatorname{Def}(X))
:= \bigl\{ [Z] \in \mathcal{C}_n(\mathcal{X}) \,\bigm|\,
\operatorname{Supp}(Z) \text{ is contained in a single fiber of } \pi \bigr\}.
\]
By Barlet’s theorem (see \cite{barlet2006espace} or Theorem~9.2.1 in \cite{magnusson2005lectures}), both
$\mathcal{C}_n(\mathcal{X})$ and $\mathcal{C}_n(\mathcal{X}/\operatorname{Def}(X))$ carry natural structures of reduced complex spaces and $\mathcal{C}_n(\mathcal{X}/\operatorname{Def}(X))$ is a closed complex subspace of $\mathcal{C}_n(\mathcal{X})$. Moreover, since $\mathcal{X}$ is countable at infinity, the cycle spaces $\mathcal{C}_n(\mathcal{X})$ and $\mathcal{C}_n(\mathcal{X}/\operatorname{Def}(X))$ are also countable at infinity and therefore have at most countably many irreducible components (see \cite[Corollary~11.1.4]{magnusson2005lectures}).
\begin{definition}
    A poor IHS manifold $X$ is called  truly poor IHS manifold, if $H^{1,1}(X,\mathbb{Q}) = 0$, or for every nonzero $ v \in H^{1,1}(X,\mathbb{Q})$, we have $q(v) < 0$.
\end{definition}
\begin{theorem}\label{thm:maain}
Let $X$ be an irreducible holomorphic symplectic manifold, and let
\[
\pi : \mathcal{X} \longrightarrow \operatorname{Def}(X)
\]
denote its universal deformation family. Define the following subsets of $\operatorname{Def}(X)$:
\begin{enumerate}
    \item 
    \[
    \operatorname{Def}(X)_{\mathrm{tp}}
    := \bigl\{\, t \in \operatorname{Def}(X) \,\bigm|\,
    X_t := \pi^{-1}(t) \text{ is truly poor} \,\bigr\};
    \]
    \item 
    \[
    \operatorname{Def}(X)_{\mathrm{c}}
    := \bigl\{\, t \in \operatorname{Def}(X) \,\bigm|\,
    X_t \text{ contains a curve} \,\bigr\};
    \]
    \item 
    \[
    \operatorname{Def}(X)_{\ge 0}
    := \bigl\{\, t \in \operatorname{Def}(X) \,\bigm|\,
    \exists\, 0 \neq L \in \operatorname{Pic}(X_t) \text{ such that } q\bigl(c_1(L)\bigr) \ge 0 \,\bigr\};
    \]
\end{enumerate}
Then
\[
\operatorname{Def}(X)_{\mathrm{tp}}
= \operatorname{Def}(X)\setminus
\bigl( \operatorname{Def}(X)_{\mathrm{c}} \cup \operatorname{Def}(X)_{\ge 0} \bigr).
\]
Moreover, $\operatorname{Def}(X)_{\mathrm{tp}}$ is the complement of a countable union of proper analytic subvarieties of $\operatorname{Def}(X)$.
\end{theorem}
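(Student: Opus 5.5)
We prove the set-theoretic equality by checking both inclusions fiberwise, and then prove the countability statement by expressing each of $\operatorname{Def}(X)_{\mathrm c}$ and $\operatorname{Def}(X)_{\ge 0}$ as a countable union of proper analytic subsets.

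\emph{The inclusion $\operatorname{Def}(X)_{\mathrm{tp}}\subset \operatorname{Def}(X)\setminus(\operatorname{Def}(X)_{\mathrm c}\cup\operatorname{Def}(X)_{\ge 0})$.} Let $X_t$ be truly poor.
\begin{itemize}
\item Every nonzero rational $(1,1)$-class has negative square. In particular, every nonzero line bundle $L$ satisfies $q(c_1(L))<0$, so $t\notin\operatorname{Def}(X)_{\ge 0}$.
\item If $\operatorname{Pic}(X_t)\ne 0$, then $X_t$ is elliptic. Being poor, it contains no curves by Theorem~\ref{theorem:nocurvespoor}.
\item If $\operatorname{Pic}(X_t)=0$, then $X_t$ contains no curves by Proposition~\ref{proposition:proppic0}.
\end{itemize}
Either way $t\notin\operatorname{Def}(X)_{\mathrm c}$.

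\emph{The reverse inclusion.} Suppose $X_t$ contains no curve and every nonzero line bundle has negative square. Since $\operatorname{NS}(X_t)\otimes\mathbb Q=H^{1,1}(X_t,\mathbb Q)$, every nonzero rational $(1,1)$-class $v$ has a multiple in $\operatorname{NS}(X_t)$, so $q(v)<0$. Hence $X_t$ has Picard rank $0$ or is elliptic. If $X_t$ contains a divisor $D$, then $D$ contains curves, contradicting the hypothesis. So $X_t$ has no divisors and no rational curves, hence is poor. Combined with the negativity of $H^{1,1}(X_t,\mathbb Q)$, this makes $X_t$ truly poor. (Theorem~\ref{theorem:nocurvespoor} can be cited instead in the elliptic case.)

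\emph{Countable union of analytic sets: the locus $\operatorname{Def}(X)_{\ge0}$.} Identify $H^2(X_t,\mathbb Z)\cong H^2(X,\mathbb Z)$ by parallel transport. Then
\[
\operatorname{Def}(X)_{\ge 0}=\bigcup_{0\ne v\in H^2(X,\mathbb Z),\,q(v)\ge 0}\{t : v\in H^{1,1}(X_t)\}.
\]
Each piece is the Noether--Lefschetz locus $\{t: q(\sigma_t,v)=0\}$. By local Torelli this is the preimage of a hyperplane section of the period domain, hence a proper analytic hypersurface; it is proper because $v\neq 0$ and the period map is a local isomorphism onto an open subset of the quadric. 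There are countably many such $v$.

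\emph{Countable union of analytic sets: the locus $\operatorname{Def}(X)_{\mathrm c}$.} This is the step I expect to be the main obstacle. I would use the relative Barlet space $\mathcal C_1(\mathcal X/\operatorname{Def}(X))$ from the preceding discussion, which has countably many irreducible components $\mathcal C^j$. The induced map $\mathcal C^j\to\operatorname{Def}(X)$ is proper on each component, since a component of the Barlet space has bounded volume and the family is Kähler; the image of $\mathcal C^j$ is then an analytic subset by Remmert's proper mapping theorem. The union of these images is exactly $\operatorname{Def}(X)_{\mathrm c}$.

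Properness of each image: a curve $C$ in $X_t$ gives a nonzero class $[C]^\vee\in H^{1,1}(X_t,\mathbb Q)$ by Remark~\ref{remark:bbf-dual-curve-classes}. This class stays of type $(1,1)$ on the whole image of $\mathcal C^j$, so the image lies in the corresponding Noether--Lefschetz hypersurface and is proper. Should properness of $\mathcal C^j\to\operatorname{Def}(X)$ fail, I would shrink $\operatorname{Def}(X)$, or work over the countably many Noether--Lefschetz loci of the curve classes together with Fujiki's results on relative cycle spaces of Kähler families.

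Combining the two loci, $\operatorname{Def}(X)_{\mathrm{tp}}$ is the complement of a countable union of proper analytic subvarieties.
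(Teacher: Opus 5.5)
Your proposal is correct and follows the paper's proof essentially step for step. The equality comes from Theorem~\ref{theorem:nocurvespoor} and Proposition~\ref{proposition:proppic0}, $\operatorname{Def}(X)_{\ge 0}$ is a countable union of Noether--Lefschetz hypersurfaces, and $\operatorname{Def}(X)_{\mathrm c}$ is the union of the images of the countably many relative Barlet components, each proper over the base. For that properness the paper cites \cite{bakker2022algebraic} over an exhaustion by relatively compact polydiscs, where you sketch the volume bound over compact subsets. Your one real variation is showing each image is proper by placing it in the Noether--Lefschetz hypersurface of the locally constant curve class, rather than using density of Picard-rank-zero fibres.

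One small correction concerns the reverse inclusion. A prime divisor in a manifold of dimension at least $4$ need not contain curves a priori; a curve-free subtorus of codimension one in a torus is an example. So the claim ``$D$ contains curves'' cannot stand on its own. The actual argument is the one in your parenthetical: in the elliptic case, Theorem~\ref{theorem:nocurvespoor} applies (negative-square divisors are uniruled), and when $\operatorname{Pic}(X_t)=0$ there are no divisors at all. That route is needed, not an optional alternative.
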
\label{thm:picn0poor}

\begin{proof}
    First, let us prove that $\operatorname{Def}(X)_c$ is a countable union of closed proper analytic subvarieties inside $\operatorname{Def}(X)$. Choose an exhaustion of $\operatorname{Def}(X)$ by polydiscs
    \[
    U_1 \Subset U_2 \Subset \cdots,\qquad \bigcup_{i\ge 1} U_i=\operatorname{Def}(X),
    \]
    where each $U_i$ is simply connected and relatively compact in $\operatorname{Def}(X)$. For each $i$, set $\mathcal{X}_{U_i}:=\pi^{-1}(U_i)$ and let
    \[
    \pi_i:\mathcal{X}_{U_i}\to U_i
    \]
    denote the restricted family.
    \begin{description}
        \item[Step 1] Consider the relative Barlet space $\mathcal{C}_1(\mathcal{X}_{U_i}/U_i)$ of $1$-cycles in the fibers of $\pi_i$, together with the natural map
        \[
        p_i:\mathcal{C}_1(\mathcal{X}_{U_i}/U_i)\longrightarrow U_i.
        \]
        Since each $U_i$ is simply connected and relatively compact in $\operatorname{Def}(X)$ Propositions 4.2 and Corollary 4.3 of \cite{bakker2022algebraic} imply that the restriction of $p_i$ to each irreducible component of $\mathcal{C}_1(\mathcal{X}_{U_i}/U_i)$ is proper. In particular, the image $p_i\bigl(\mathcal{C}_1(\mathcal{X}_{U_i}/U_i)\bigr)\subset U_i$ is a countable union of analytic subvarieties.
        \item[Step 2] We also have the map
        \begin{align*}
        p : \mathcal{C}_1(\mathcal{X}/\operatorname{Def}(X)) &\longrightarrow \operatorname{Def}(X), \\\
        [Z] &\longmapsto \pi(\operatorname{Supp}(Z)).
        \end{align*}
        Clearly, \(p\bigl(\mathcal{C}_1(\mathcal{X}/\operatorname{Def}(X))\bigr)=\operatorname{Def}(X)_c\) and
        \[
        p\bigl(\mathcal{C}_1(\mathcal{X}/\operatorname{Def}(X))\bigr) \cap U_i = p_i\bigl(\mathcal{C}_1(\mathcal{X}_{U_i}/U_i)\bigr).
        \]
        Since each irreducible component of \(p_i\bigl(\mathcal{C}_1(\mathcal{X}_{U_i}/U_i)\bigr)\) is closed, \(\operatorname{Def}(X)_c\) is a countable union of closed sets in \(\operatorname{Def}(X)\). Write
        \[
        \mathcal{C}_1(\mathcal{X}/\operatorname{Def}(X)) = \bigcup_m Z_m, \qquad 
        \mathcal{C}_1(\mathcal{X}_{U_i}/U_i) = \bigcup_m W_{m,i},
        \]
        where each \(Z_m\) is irreducible. From the above discussion, there are only countably many \(Z_m\). Set
        \[
        A_m := p(Z_m).
        \] 
        Then
        \[
        \operatorname{Def}(X)_c = \bigcup_m A_m.
        \]
        It remains to show that each \(A_m\) is an analytic subvariety of \(\operatorname{Def}(X)\). Define
        \[
        Z_m^{(i)} := Z_m \cap \mathcal{C}_1(\mathcal{X}_{U_i}/U_i).
        \] 
        Then \(Z_m^{(i)}\) is contained in a unique irreducible component \(W_{m',i}\). By Step~1, the restriction
        \[
        p_i|_{W_{m',i}} : W_{m',i} \longrightarrow U_i
        \] 
        is proper. Since \(Z_m^{(i)}\) is an analytic subvariety of \(W_{m',i}\), its image is an analytic subvariety of \(U_i\). Moreover,
        \[
        p_i\bigl(Z_m^{(i)}\bigr) = A_m \cap U_i.
        \]
        Because this holds for every \(i\), the sets \(A_m \cap U_i\) are analytic subvarieties on an open cover of \(\operatorname{Def}(X)\). Hence, each \(A_m\) is a closed analytic subvariety of \(\operatorname{Def}(X)\). Consequently, \(\operatorname{Def}(X)_c\) is a countable union of analytic subvarieties in \(\operatorname{Def}(X)\).
        \item[Step 3] Finally, $\operatorname{Def}(X)_c$ is proper: by \cite{campana1983densite}, the set
        \[
        \operatorname{Def}(X)_{\rho=0} := \{t \in \operatorname{Def}(X) \mid \text{the Picard rank of } X_t \text{ is } 0\}
        \]
        is dense in $\operatorname{Def}(X)$, and Proposition \ref{proposition:proppic0} implies that $\operatorname{Def}(X)_c$ is proper. 
    \end{description}
    Let us prove that $\operatorname{Def}(X)_{\ge 0}$ is a countable union of proper analytic subvarieties of $\operatorname{Def}(X)$. Fix a lattice isomorphism
    \(
    H^2(X,\mathbb{Z}) \cong \Lambda.
    \)
    For each $z \in \Lambda$, consider the subset
    \[
    \operatorname{Def}(X)_{z} := \bigl\{\, t \in \operatorname{Def}(X) \,\bigm|\, \exists\, L \in \operatorname{Pic}(X_t) \text{ with } c_1(L) = z \bigr\}.
    \]
    By \cite[1.16]{huybrechts1997compact}, for every nonzero $z$ the set $\operatorname{Def}(X)_{z}$ is empty or a hypersurface in $\operatorname{Def}(X)$. Hence
    \[
    \operatorname{Def}(X)_{\ge 0} = \bigcup_{\substack{z \in \Lambda \\ q(z) \ge 0}} \operatorname{Def}(X)_{z},
    \]
    so $\operatorname{Def}(X)_{\ge 0}$ is a countable union of proper analytic subvarieties of $\operatorname{Def}(X)$. The equality
    \[
    \operatorname{Def}(X)_{\mathrm{tp}}
    = \operatorname{Def}(X)\setminus
    \bigl( \operatorname{Def}(X)_{\mathrm{c}} \cup \operatorname{Def}(X)_{\ge 0} \bigr)
    \]
     follows from Theorem \ref{theorem:nocurvespoor}. Therefore, $\operatorname{Def}(X)_{\mathrm{tp}}$ is the complement of a countable union of proper analytic subvarieties in $\operatorname{Def}(X)$.
\end{proof}
The most natural question one could ask is: Are there any poor IHS manifold with nonzero Picard group? We know such $K3$ surfaces exist from \cite{vikash2026classificationpoormanifoldslow}. Let us try to answer that question in higher dimensions.
\begin{theorem}\label{thm:picneq0}
Let $\Lambda$ be an integral lattice of IHS manifold type, with rank greater than or equal to $6$. Then there exists an irreducible holomorphic symplectic manifold $X$ with $\operatorname{Pic}(X)\neq 0$, which is poor, together with an isomorphism
\[
\phi : H^2(X,\mathbb{Z}) \xrightarrow{\ \cong\ } \Lambda.
\]
\end{theorem}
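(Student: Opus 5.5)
Rather than working deformation type by deformation type, I would build a marked IHS manifold in the given component with Picard lattice $\mathbb Z z$ for a carefully chosen primitive $z$ with $q(z)<0$. Such an $X$ is elliptic. By the Claim in the proof of Theorem~\ref{Theorem:thmMBMclass} (with Theorem~\ref{theorem:nocurvespoor}), it is poor as soon as $z$ is not MBM.

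\textbf{Step 1: making $z$ of type $(1,1)$.} For a primitive $z\in\Lambda$ with $q(z)<0$, the orthogonal complement $z^\perp$ has signature $(3,\operatorname{rk}\Lambda-4)$, so $Q_\Lambda\cap\mathbb P(z^\perp\otimes\mathbb C)$ is nonempty. By surjectivity of the period map on $\mathfrak M^0_\Lambda$, every point of it is the period of some $(X,\varphi)$. A very general point of this locus avoids the countably many hyperplanes $w^\perp$ for $w\notin\mathbb Q z$. Hence $\operatorname{NS}(X)=\mathbb Z z$, so $\operatorname{Pic}(X)\neq0$ and $X$ is elliptic of Picard rank $1$.

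\textbf{Step 2: choosing $z$ not MBM.} Here I would use the deformation-type-independent finiteness result of Amerik--Verbitsky: for a fixed lattice $\Lambda$ of IHS type with $\operatorname{rk}\Lambda\ge 5$, the BBF squares of primitive MBM classes are bounded, i.e.\ $-N\le q(z)<0$ for some $N=N(\Lambda)$ (the Morrison--Kawamata type boundedness). This is where the hypothesis $\operatorname{rk}\Lambda\ge6$ enters. The step then needs a primitive $z$ with $q(z)<-N$. Since $\operatorname{rk}\Lambda\ge6$ and the signature is $(3,\operatorname{rk}\Lambda-3)$, the lattice $\Lambda$ contains a hyperbolic plane over $\mathbb Q$ and indefinite integral sublattices. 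Concretely, I would take two vectors $e,f$ spanning a sublattice on which $q$ is indefinite. I would then take $z=ae+bf$ with $\gcd(a,b)=1$ and $q(z)$ very negative; for example, if $q(e)=0$ and $q(e,f)=d\neq0$, set $z=e-kf'$ after adjusting. Primitivity is arranged by the gcd condition, after possibly dividing out the saturation index, which only changes $q$ by a bounded factor. Such a $z$ is not MBM for any complex structure.

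\textbf{Step 3: conclusion.} With $z$ as in Step 2 and $X$ as in Step 1, every rational $(1,1)$-class of $X$ is a multiple of $z$, so $X$ has no MBM classes. Suppose $X$ contained a rational curve $C$. Then $[C]^\vee$ would be a rational multiple of $\pm z$, so $z$ would be $\mathbb Q$-effective, and \cite[Theorem~5.10]{amerik2015rational} would make it MBM, which is a contradiction. So $X$ contains no rational curves, and Theorem~\ref{theorem:nocurvespoor} gives that $X$ is poor, with the marking $\varphi$ providing the isomorphism $\phi$.

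The main obstacle is Step 2, specifically justifying the uniform boundedness of $|q|$ on MBM classes for an arbitrary IHS-type lattice. If I cannot cite it cleanly, my fallback is to pick $z$ so that $z^\perp$ is hyperbolic-type in a way that forbids the MBM wall from meeting $\mathcal C_X$. Alternatively I would use that MBM classes form finitely many $\operatorname{Mon}$-orbits and choose $q(z)$ outside the finite set of their squares.
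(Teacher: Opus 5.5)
Your proposal is correct and follows essentially the same route as the paper: pick a primitive $z$ whose square lies below the Amerik--Verbitsky bound for primitive MBM classes, which is where rank $\ge 6$ is used. Then realize $\operatorname{NS}(X)=\mathbb{Z}z$ via surjectivity of the period map, and deduce from the absence of MBM classes that $X$ has no rational curves, hence is poor by Theorem~\ref{theorem:nocurvespoor}.

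The paper produces $z$ more simply: it extends a primitive $v$ with $q(v)<0$ to a basis $(v,e_1,\dots,e_k)$ of $\Lambda$ and takes $nv+e_1$, which avoids your indefinite sublattice and saturation adjustment. Note also that your sample choice $e-kf$ can have positive square when $q(f)>0$, whereas $ke+f$ with $kd\to-\infty$ works.
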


\begin{proof}
    Let us give the proof in steps.
    \begin{description}
        \item[Step 1] Let us prove that there exists an infinite sequence of primitive vectors $v_n \in \Lambda$ such that $q(v_n) \to -\infty$, where $q$ is the quadratic form associated to $\Lambda$. Since $\Lambda$ is an integral lattice of IHS manifold type, there exists a primitive vector $v \in \Lambda$ with $q(v) < 0$. Extend $v$ to a basis $(v, e_1, \dots, e_k)$ of $\Lambda$ and set
        \[
        v_n := n v + e_1, \qquad n \in \mathbb{Z}.
        \]
        In this basis, the coordinates of $v_n$ are $(n,1,0,\dots,0)$, so $\gcd(n,1,0,\dots,0)=1$ and therefore each $v_n$ is primitive. Moreover,
        \[
        q(v_n) = q(nv + e_1)
           = n^2 q(v) + 2n\, q(v,e_1) + q(e_1).
        \]
        Since $q(v) < 0$ and $q(v_n)$ is a quadratic polynomial in $n$ with a negative leading coefficient, we have $q(v_n) \to -\infty$ as $|n|\to\infty$.
        \item[Step 2] Let $\mathfrak{M}^0_{\Lambda}$ be a connected component of the moduli space $\mathfrak{M}_{\Lambda}$ of marked irreducible holomorphic symplectic manifolds with Beauville-Bogomolov lattice $\Lambda$. By \cite[Corollary~5.2]{amerik2017morrison} there exists a constant $C>0$ such that
        \[
        |q(z)| < C
        \]
        for every primitive MBM class $z$ of type $(1,1)$ on any manifold in $\mathfrak{M}^0_{\Lambda}$ because rank of $\Lambda$ is greater than or equal to $6$. Using Step~1, we can choose a primitive class $z \in \Lambda$ with $q(z) < -C$. In particular, $z$ cannot be MBM for any irreducible holomorphic symplectic manifold in $\mathfrak{M}^0_{\Lambda}$.
        \item[Step 3] From Step~2 we obtain a class $z \in \Lambda$ such that $q(z) < -C$.
        By surjectivity of the period map on the connected component $\mathfrak{M}^0_{\Lambda}$, there exists a marked IHS manifold $(Y,\psi) \in \mathfrak{M}^0_{\Lambda}$ such that
        \[
        \mathcal{P}([(Y,\psi)]) \in
        Q_{\Lambda} \cap \mathbb{P}(z^{\perp} \otimes \mathbb{C}).
        \] 
        For this $Y$, the Hodge decomposition satisfies
        \[
        H^{1,1}(Y,\mathbb{R}):= H^{1,1}(Y) \cap H^2(Y,\mathbb{R})
        = \mathcal{P}([(Y,\psi)])^{\perp} \subset \Lambda_{\mathbb{R}},
        \]
        so $z \in H^{1,1}(Y) \cap H^2(Y,\mathbb{Z})$, i.e.\ $z$ is a $(1,1)$-class on $Y$. If $\Pic(Y)$ is generated by $z$, then by Step~2 the class $z$ is not MBM. Therefore, by \cite[Theorem~5.15]{amerik2015rational}, $Y$ contains no rational curves. By Theorem~\ref{theorem:nocurvespoor} this implies that $Y$ is poor, which completes the proof in this case. If instead $\Pic(Y)$ is not generated by $z$, we deform $Y$ inside its Kuranishi space $\operatorname{Def}(Y)$ to a point $t$ such that $z$ remains of type $(1,1)$ on $Y_t$ and $\Pic(Y_t)$ is generated by $z$. This is possible by \cite{huybrechts1997compact}. Applying the previous argument to $Y_t$ then shows that $Y_t$ does not contain any rational curves and is poor, as required.
    \end{description}
\end{proof}

The above assumption is not unnatural: it is conjectured that every IHS manifold $X$ satisfies $b_2(X)\ge 7$. We impose the hypothesis $b_2(X)\ge 6$ in order to apply results from \cite{amerik2017morrison}.
\subsection{Examples}
 We now give examples (following \cite{HassettTschinkel2010ExtremalRays}) of IHS manifolds containing rational curves but admitting no codimension-one subvarieties. In particular, even in the elliptic IHS manifold setting, the absence of codimension-one subvarieties is strictly weaker than being poor.
\begin{example}\label{exp:AVexample}
By \cite[Theorem~4.1]{amerik2019mbm}, there exists an IHS manifold $X$ of $K3^{[2]}$-type and an MBM class$ z \in H^{1,1}(X,\mathbb{Q})$ such that $q(z) = -\frac{5}{2}$ and the full MBM locus of $z$ is isomorphic to $\mathbb{P}^2$ (for the definition of the full MBM locus, see \cite[Definition~1.5]{amerik2021contraction}). Deform $X$ to a nearby fiber $ X_1 \in \operatorname{Def}(X,z) $ such that $ \operatorname{rank}\operatorname{Pic}(X_1) = 1, $ and let $ z' \in H^{1,1}(X_1,\mathbb{Q})$ be the corresponding class. Then $ q(z') = q(z) = -\frac{5}{2}.$ Moreover, $z'$ is MBM, since the MBM property is deformation invariant as long as the class remains of type $(1,1)$, by \cite[Corollary~5.13]{amerik2015rational}. Since $z'$ generates $\operatorname{Pic}(X_1)$ over $\mathbb{Q}$, after replacing $z'$ by $-z'$ if necessary, \cite[Theorem~5.10]{amerik2015rational} implies that $z'$ is represented by an extremal rational curve. Hence, by \cite[Theorem~4.1]{amerik2019mbm}, its full MBM locus $ Z_{z'} \subset X_1 $ is isomorphic to $\mathbb{P}^2$. We claim that $X_1$ contains no codimension-one subvarieties. Suppose, to the contrary, that $ D \subset X_1$ is an irreducible divisor. Since $\operatorname{Pic}(X_1)$ has rank one and is generated over $\mathbb{Q}$ by $z'$, there exists $\lambda \in \mathbb{Q} \setminus \{0\}$ such that $ [D] = \lambda z'. $ Consequently, $ q(D) = \lambda^2 q(z') < 0. $ By \cite[Proposition~4.7]{boucksom2004divisorial}, every prime divisor of negative Beauville--Bogomolov square on an IHS manifold is uniruled. Thus $D$ is covered by rational curves. Let $C \subset D$ be a rational curve through a general point of $D$. Under the Beauville--Bogomolov identification $ H_2(X_1,\mathbb{Q}) \simeq H^2(X_1,\mathbb{Q}),$ the class $[C]$ is a rational $(1,1)$-class. Since $ H^{1,1}(X_1,\mathbb{Q}) = \mathbb{Q} z',$ we have $ [C] \in \mathbb{Q} z'. $ Therefore $C$ is contained in the full MBM locus $Z_{z'}$. Since such curves pass through a general point of $D$, we obtain $ D \subset Z_{z'}.$ But $ Z_{z'} \simeq \mathbb{P}^2, $ whereas $D$ is a divisor in the fourfold $X_1$, and hence $ \dim D = 3.$ This is impossible. Therefore $X_1$ contains no codimension-one subvarieties. On the other hand, $Z_{z'} \simeq \mathbb{P}^2$ is nonempty and is swept out by rational curves. Hence $X_1$ contains rational curves. We have therefore constructed an IHS manifold $X_1$ of $K3^{[2]}$-type which contains rational curves but no codimension-one subvarieties. 
\end{example}
In abitary dimension one could do the following. As pointed out to us by B.~Bakker, the construction of Bakker--Lehn \cite[Example~9.3]{BL22}, based on their deformation
theory of symplectic contractions \cite[Theorem~4.1 and Propositions~4.5 and~5.8]{BL21},
yields, for every $n\geq2$, a Picard-rank-one IHS manifold of $K3^{[n]}$-type containing a Lagrangian $\mathbb P^n$ but no divisors, by deforming a small contraction of $\mathbb P^n$
along the Hodge locus of its line class.
\begin{question}
    Let $X$ be an \emph{irreducible holomorphic symplectic} manifold with no rational curves. Is it true that $X$ contains no curves at all ?
\end{question}

\section*{Acknowledgments} I am grateful to my advisors, John Lesieutre, Yuriy Zarhin and Tatiana Bandman, for useful, stimulating discussions, and very helpful comments. I would like to thank Ekaterina Amerik for several helpful discussions and valuable comments. I am grateful to B.~Bakker for helpful comments. I would like to thank several graduate students at Penn State (Andy B. Day, Eugene Henninger-Voss, Neelarnab Raha, Satwata Hans, Xingkai Wang and Louis Diaz) for helpful discussions.

\end{document}